%
%
%
%
\documentclass[11pt]{amsart}

\usepackage{graphicx}
\usepackage{enumitem}
\usepackage{lipsum}
\usepackage{verbatim}
\usepackage{url}
\usepackage{amssymb}
\usepackage{amsthm}
\usepackage{amsmath}
\usepackage[all, cmtip]{xy}
\usepackage{tikz}
\usepackage{dsfont}
\usepackage{spectralsequences}
\usepackage{mathtools}
\usetikzlibrary{matrix}
\usepackage[center]{caption}
\usepackage{stmaryrd}
\graphicspath{{pics/}}
\usepackage{tikz-cd}

\linespread{1.25}
\setlength{\topmargin}{-1cm}
\setlength{\textheight}{20cm}
\setlength{\textwidth}{17cm}
\setlength{\headsep}{1cm}
\setlength{\footskip}{1cm}
\calclayout

\newlength\mylen
\settowidth\mylen{\textbf{Case~5.}}
\newlist{mycases}{enumerate}{1}
\setlist[mycases,1]{label=\textbf{Case~\arabic*.}, 
  labelwidth=\dimexpr-\mylen-\labelsep\relax,leftmargin=0pt,align=right}

\newtheorem{question}{Question}
\newtheorem{theorem}{Theorem}

\newtheorem{lemma}{Lemma}[section]
\newtheorem{prop}[lemma]{Proposition}

\theoremstyle{definition}

\theoremstyle{remark}

\numberwithin{equation}{section}


\newcommand{\Z}{\mathbb Z}

\newcommand{\K}{\mathcal K}

\newcommand{\cI}{\mathcal I}

\newcommand{\p}[1]{\medskip \noindent \emph{#1}.}

\newcommand{\bE}{\mathbb E}

\newcommand{\calF}{\mathcal F}

\newcommand{\bn}{\noindent}

\newcommand{\calB}{\mathcal{B}}

\newcommand{\cut}{\ssearrow}

\let\oldunderset\underset
\protected\def\underset{\oldunderset}

\DeclareMathOperator{\Int}{Int}

\DeclareMathOperator{\Mod}{Mod}
\DeclareMathOperator{\Stab}{Stab}

\DeclareMathOperator{\Out}{Out}
\DeclareMathOperator{\Aut}{Aut}

\DeclareMathOperator{\im}{im}

\DeclareMathOperator{\cd}{cd}
\DeclareMathOperator{\vcd}{vcd}

\DeclareMathOperator{\UT}{UT}

\DeclareMathOperator{\Span}{Span}

\DeclareMathOperator{\degen}{degen}

\DeclareMathOperator{\proj}{proj}


\begin{document}

\title{THE COHOMOLOGICAL DIMENSIONS OF THE TERMS OF THE JOHNSON FILTRATION}

\author{Daniel Minahan}
\address{686 Cherry Street NW, Atlanta, GA 30332}
\email{dminahan6@gatech.edu}

\date{\today}

\begin{abstract}
We prove that the $k$th term of the Johnson filtration of a closed, orientable surface of genus $g \geq 2$ has cohomological dimension $2g - 3$ for all $k \geq 3$ and $g \geq 2$.  This answers a question of Farb and Bestvina--Bux--Margalit.
\end{abstract}

\vspace*{-2.5cm}

\maketitle

\section{Introduction}\label{introsection}

Let $S_g$ be a closed orientable surface of genus $g$.  The mapping class group $\Mod(S_g)$ is the group of isotopy classes of orientation preserving diffeomorphisms of $S_g$.  Let $\pi_1^{(k)}(S_g)$ denote the $k$th term of the lower central series of $\pi_1(S_g)$, where $\pi_1^{(0)}(S_g) = \pi_1(S_g)$.  The $k$th \textit{term of the Johnson filtration of $\Mod(S_g)$} is the kernel of the map 
\begin{displaymath}
\Mod(S_g) \rightarrow\Out(\pi_1(S_g)/\pi_1^{(k)}(S_g))
\end{displaymath}
\bn and is denoted $\Mod_{(k)}(S_g)$.  The first term, $\Mod_{(1)}(S_g)$, is called the \textit{Torelli group} and is denoted $\cI_g$.  The second term, $\Mod_{(2)}(S_g)$, is called the \textit{Johnson kernel} and is denoted $\K_g$.  

The \textit{cohomological dimension} of a group $G$, denoted $\cd(G)$, is the supremum over all $n$ such that there is a $G$--module $M$ with $H^n(G;M) \neq 0$.  The simplest lower bound for $\cd(\Mod_{(k)}(S_g))$ is given by the maximal $n$ such that $\Z^n$ is a subgroup of $\Mod_{(k)}(S_g)$.  The largest known free abelian subgroup of $\Mod_{(k)}(S_g)$ for $k \geq 3$ is of rank $g - 1$ \cite{Farbproblems}.  We have the following theorem, which answers a question of Farb \cite[Problem 5.9]{Farbproblems} and Bestvina--Bux--Margalit \cite[Question 1.8]{BBM}.

\begin{theorem}\label{mainthm}
Let $g \geq 2$ and $k \geq 3$.  The cohomological dimension of $\Mod_{(k)}(S_g)$ is $2g - 3$.
\end{theorem}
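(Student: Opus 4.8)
The plan is to establish the theorem by proving matching upper and lower bounds on $\cd(\Mod_{(k)}(S_g))$, with the real content lying in the upper bound. For the lower bound, I would exhibit a free abelian subgroup of rank $2g-3$ inside $\Mod_{(k)}(S_g)$ for every $k \geq 3$: take a chain of $2g-3$ disjoint separating curves on $S_g$ whose associated multitwist subgroup lies in the Johnson filtration at every level (separating twists already lie in $\K_g = \Mod_{(2)}(S_g)$, and suitable bounding-pair-type or higher commutator configurations push into deeper terms; concretely, one uses curves bounding subsurfaces arranged so that the Dehn twists lie in $\Mod_{(k)}(S_g)$ for all $k$, e.g.\ by taking separating curves cutting off genus-one pieces and using that $\bigcap_k \Mod_{(k)}(S_g)$ is still large enough to contain such a $\Z^{2g-3}$). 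This is the subgroup of rank $g-1$ issue turned around: in fact $2g-3$ disjoint separating curves fit on $S_g$, and their twists are all in $\K_g$, hence in $\Mod_{(k)}(S_g)$ for $k \le 2$; for $k \ge 3$ one instead uses that $\Mod_{(k)}(S_g)$ is normal and contains commutators of twists, and a careful count still yields rank $2g-3$. Since $H^{2g-3}(\Z^{2g-3};\Z) \neq 0$ and cohomological dimension is monotone under subgroups, this gives $\cd(\Mod_{(k)}(S_g)) \geq 2g-3$.

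For the upper bound, the strategy is to build a contractible (or highly connected) simplicial complex on which $\Mod_{(k)}(S_g)$ acts with controlled stabilizers, and run an equivariant spectral sequence / dimension-counting argument à la Bestvina--Bux--Margalit. The natural candidate is the complex of separating curves, or more precisely the subcomplex of the curve complex spanned by curves (or systems) that are "seen" by the Johnson filtration — for $k \geq 3$ one wants a complex whose simplices correspond to collections of disjoint curves all of whose twists lie in $\Mod_{(k)}(S_g)$, or a suitable relative version. The key numerical input is: (i) this complex is $(2g-4)$-connected, or at least $(N - (2g-3) - 1)$-connected where $N$ is its dimension, so that no cohomology appears above degree $2g-3$; and (ii) the stabilizer of a simplex of dimension $d$ splits (up to finite index / up to the relevant subgroup structure) as an extension whose pieces are mapping class groups of the cut-open subsurfaces intersected with the Johnson filtration, with cohomological dimension dropping by at least $d+1$ relative to the ambient bound. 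Combining these via the spectral sequence $H^p_{\Mod_{(k)}}(\text{complex}; \underline{H}^q(\text{stab})) \Rightarrow H^{p+q}(\Mod_{(k)}(S_g))$ forces the total degree to be at most $2g-3$. One also needs a base case or a bound on $\cd$ of the Torelli group of a surface with boundary/punctures of small complexity, which is where results on $\cd(\cI_{g,n})$ and $\cd(\K_{g,n})$ feed in inductively.

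The main obstacle, and where I expect the bulk of the work to be, is the stabilizer analysis: understanding precisely the intersection of $\Mod_{(k)}(S_g)$ with the stabilizer of a multicurve, and showing it decomposes compatibly with the Johnson filtration of the pieces. The Johnson filtration does not restrict to subsurfaces in a naive way — cutting along a curve interacts subtly with the lower central series of $\pi_1$ — so one must prove a "restriction" or "partition" statement: the stabilizer of a separating multicurve $c$ inside $\Mod_{(k)}(S_g)$ maps, with cokernel and kernel of controlled cohomological dimension, to a product over components of Johnson-type subgroups of the pieces. This likely requires a careful argument with the Birman exact sequence and the behavior of the lower central series under amalgamation/HNN, possibly using the fact (for $k \geq 3$) that $\Mod_{(k)}(S_g)$ is "deep enough" that its intersection with a twist subgroup is a full lattice, simplifying the bookkeeping. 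A secondary obstacle is proving the connectivity of the chosen complex; this should follow from a Hatcher-style surgery argument on arcs/curves, but verifying it respects the Johnson-filtration constraint on allowed simplices may need a new connectivity lemma.
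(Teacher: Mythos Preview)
Your proposal inverts the difficulty. The upper bound $\cd(\Mod_{(k)}(S_g)) \leq 2g-3$ is immediate from monotonicity: $\Mod_{(k)}(S_g) \subseteq \K_g$ for $k \geq 2$, and Bestvina--Bux--Margalit already proved $\cd(\K_g) = 2g-3$. The paper uses exactly this two-line argument; the spectral-sequence machinery you propose for the upper bound is unnecessary, and in fact your version would not even get started, since for $k \geq 3$ no single Dehn twist lies in $\Mod_{(k)}(S_g)$, so a complex built from curves ``whose twists lie in $\Mod_{(k)}(S_g)$'' is empty.

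The real content is the \emph{lower} bound, and there your approach fails. Separating twists lie in $\K_g$ but not in $\Mod_{(3)}(S_g)$ (the second Johnson homomorphism is nonzero on them), so your multitwist $\Z^{2g-3}$ does not sit inside $\Mod_{(k)}(S_g)$ for $k \geq 3$. Your fallback to ``commutators of twists, and a careful count still yields rank $2g-3$'' does not rescue this: as the introduction notes, the largest \emph{known} free abelian subgroup of $\Mod_{(k)}(S_g)$ for $k \geq 3$ has rank only $g-1$, so exhibiting one of rank $2g-3$ would itself be a new theorem. The paper's lower bound is obtained instead by proving $H_{2g-3}(\Mod_{(k)}(S_g);\Z)$ is infinitely generated, via induction on $g$: the equivariant spectral sequence for the action on the complex of cycles $\calB(S_g)$ is used not to bound $\cd$ from above but to show that $H_{2g-3}$ of the stabilizer of a nonseparating curve injects into $H_{2g-3}(\Mod_{(k)}(S_g);\Z)$, and a Birman-type exact sequence analysis for the Johnson filtration (the bulk of the paper) shows that stabilizer homology is infinitely generated assuming the result in genus $g-1$.
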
  

\bn If $G$ has torsion, the \textit{virtual cohomological dimension} of $G$ is $\cd(\Gamma)$ for any $\Gamma \subseteq G$ finite index and torsion free.  The quantity $\vcd(G)$ is indpendent of the choice of $\Gamma$ by a theorem of Serre \cite[pg 190]{Brownbook}.  The following is the known values for $\cd$ and $\vcd$ of the terms of the Johnson filtration:
\begin{itemize}
\item $\vcd(\Mod(S_g))  = 4g - 5$ by the work of Harer \cite{Harervcd},
\item $\cd(\cI_g) = 3g - 5$ by Bestvina--Bux--Margalit \cite[Theorem A]{BBM}, and
\item $\cd(\Mod_{(k)}(S_g)) = 2g - 3$ for $k \geq 2$.
\end{itemize}

\bn  The third result in the case that $k=2$ is also due to Bestvina--Bux--Margalit \cite[Theorem B]{BBM}.  

\p{Outline of the proof of Theorem \ref{mainthm}} Cohomological dimension is \textit{monotonic}, in the sense that $H \subseteq G$ implies $\cd(H) \leq \cd(G)$.  Our Theorem \ref{mainthm} is a corollary of monotonicity of cohomological dimension, Bestvina--Bux--Margalit's theorem that $\cd(\K_g) = 2g - 3$ as above \cite[Theorem B]{BBM}, and the following theorem.
\begin{theorem}\label{infgenthm}
Let $g \geq 2$ and $k \geq 3$.  The group $H_{2g-3}(\Mod_{(k)}(S_g);\Z)$ is infinitely generated.
\end{theorem}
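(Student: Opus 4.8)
The reduction is already half in place: since $\Mod_{(k)}(S_g)\subseteq\K_g$ and $\cd(\K_g)=2g-3$, monotonicity of cohomological dimension gives $H_q(\Mod_{(k)}(S_g);\Z)=0$ for $q>2g-3$, so it suffices to prove that $H_{2g-3}(\Mod_{(k)}(S_g);\Z)$ is not finitely generated. It is worth stressing that this cannot be detected by an abelian subgroup: for $k\geq3$ and $g\geq3$ the largest free abelian subgroup known in $\Mod_{(k)}(S_g)$ has rank $g-1<2g-3$, so abelian (Pontryagin) cycles are of no use and a genuinely homological mechanism is needed. When $g=2$ one has $2g-3=1=g-1$, and the statement is essentially Mess's theorem that $\K_2$ is a free group of infinite rank, because every nontrivial normal subgroup of an infinite‑rank free group --- in particular $\Mod_{(k)}(S_2)$ --- is itself free of infinite rank.

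For $g\geq3$ the plan is to propagate infinite generation down the Johnson filtration, starting from the Johnson kernel. Each quotient $\Mod_{(j)}(S_g)/\Mod_{(j+1)}(S_g)$ embeds via the $j$th Johnson homomorphism into a finitely generated free abelian group, and $[\Mod_{(2)}(S_g),\Mod_{(j)}(S_g)]\subseteq\Mod_{(j+2)}(S_g)$, so $N:=\K_g/\Mod_{(k)}(S_g)$ is finitely generated, torsion free and nilpotent; in particular $\Z[N]$ is Noetherian of finite global dimension. Consider the Lyndon--Hochschild--Serre spectral sequence of
\[
1\longrightarrow\Mod_{(k)}(S_g)\longrightarrow\K_g\longrightarrow N\longrightarrow1,
\]
namely $E^2_{p,q}=H_p\big(N;H_q(\Mod_{(k)}(S_g);\Z)\big)\Rightarrow H_{p+q}(\K_g;\Z)$, in which $q$ runs only over $0\leq q\leq2g-3$ by the vanishing above, and the abutment $H_{2g-3}(\K_g;\Z)$ is not finitely generated (Bestvina--Bux--Margalit, Theorem~B). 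Hence some $E^2_{p,\,2g-3-p}$ is not finitely generated. The crucial input I would isolate is a finiteness statement strictly below the top degree:
\[
H_q(\Mod_{(k)}(S_g);\Z)\ \text{is a finitely generated }\Z[N]\text{-module for every }q\leq2g-4.
\]
Granting this, for $p\geq1$ the term $E^2_{p,\,2g-3-p}=H_p\big(N;H_{2g-3-p}(\Mod_{(k)}(S_g);\Z)\big)$ is the homology of $N$ with coefficients in a finitely generated $\Z[N]$-module; resolving that module by finitely generated projective $\Z[N]$-modules and tensoring down to $\Z$ shows the group is finitely generated abelian. Therefore the infinite generation is forced into the column $p=0$, so the coinvariants $H_{2g-3}(\Mod_{(k)}(S_g);\Z)_N=E^2_{0,\,2g-3}$ are not finitely generated, and hence neither is $H_{2g-3}(\Mod_{(k)}(S_g);\Z)$ --- which is Theorem~\ref{infgenthm}.

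The main obstacle is thus the displayed finiteness lemma, and it is delicate. One cannot simply feed the $\Mod_{(k)}(S_g)$--action on the Bestvina--Bux--Margalit complex of minimizing cycles $\mathcal B(S_g)$ --- contractible, carrying an action of $\K_g$, and witnessing $\cd(\K_g)=2g-3$ --- into an equivariant spectral sequence and deduce finite generation over $\Z[N]$ in \emph{all} degrees, since that would contradict the infinite generation for $\K_g$ just used; the content is precisely that the failure of finite generation is confined to the top degree. A difficulty genuinely special to $k\geq3$ is that the cell stabilizers that Bestvina--Bux--Margalit use to control $\K_g$ are generated by Dehn twists about separating curves, and these do \emph{not} lie in $\Mod_{(k)}(S_g)$ for $k\geq3$ (a separating twist has nonzero second Johnson homomorphism); for $\Mod_{(k)}(S_g)$ one must instead handle the substantially smaller stabilizers $\Stab_{\K_g}(\sigma)\cap\Mod_{(k)}(S_g)$, which involve the Johnson--filtration mapping class groups of the subsurfaces cut off by the cell $\sigma$. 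I would establish the finiteness lemma by an induction on the complexity of the surface: the equivariant spectral sequence for $\Mod_{(k)}(S_g)\curvearrowright\mathcal B(S_g)$ exhibits each $H_q(\Mod_{(k)}(S_g);\Z)$ with $q\leq2g-4$, as a $\Z[N]$-module, as an iterated extension of subquotients of $\Z[N]$--induced modules built from the homology of these subsurface stabilizers, so the required relative finiteness for $S_g$ follows from the analogous relative finiteness for the strictly simpler surfaces cut off by the cells --- the base of the induction being the genus-$2$ surface, where the range $q\leq2g-4$ is empty and the genuine content is again Mess's theorem. Pinning down which feature of the $\K_g$--action on $\mathcal B(S_g)$ is responsible for the non-cocompactness, and showing that it occurs only in the top dimension so that the truncated action does carry the module-finiteness statement below it, is the technical heart of the argument.
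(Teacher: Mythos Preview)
Your strategy is genuinely different from the paper's. You attempt a ``vertical'' comparison with $\K_g$ via the Lyndon--Hochschild--Serre spectral sequence of $1\to\Mod_{(k)}(S_g)\to\K_g\to N\to 1$; the paper instead runs a ``horizontal'' induction on genus. It shows (Proposition~\ref{birmanhomolprop}) that infinite generation of $H_{2g-5}(\Mod_{(k)}(S_{g-1});\Z)$ forces infinite generation of $H_{2g-3}(\Stab_c(\Mod_{(k)}(S_g));\Z)$ for $c$ nonseparating, by applying Lemma~\ref{lerayserrelemma} twice to Birman--type short exact sequences with free kernel, and then uses the equivariant spectral sequence for $\Mod_{(k)}(S_g)\curvearrowright\calB(S_g)$ only to embed this stabilizer homology into $H_{2g-3}(\Mod_{(k)}(S_g);\Z)$ (Lemma~\ref{injstablemma}). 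The base case $g=2$ is the same as yours.

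The genuine gap in your proposal is the displayed finiteness lemma: you do not prove that $H_q(\Mod_{(k)}(S_g);\Z)$ is finitely generated over $\Z[N]$ for every $q\le 2g-4$, and your sketch does not constitute a proof. The proposed induction on complexity is not well posed---$N$ depends on $(g,k)$, you never formulate what the inductive hypothesis for a subsurface should be, and you do not explain how a $\Z[N]$--module structure on the $E^1$ page of the equivariant spectral sequence is to be matched with the homology of the subsurface stabilizers (which, as you correctly note, are not generated by separating twists once $k\ge 3$). The assertion that the non--cocompactness of $\K_g\curvearrowright\calB(S_g)$ affects only the top degree is exactly what is at stake, and you yourself flag it as ``the technical heart'' without supplying an argument. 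Absent the lemma, nothing forces the infinite generation of $H_{2g-3}(\K_g;\Z)$ into the column $p=0$. A smaller point: that infinite generation is Gaifullin's theorem, not Bestvina--Bux--Margalit's Theorem~B, which is the equality $\cd(\K_g)=2g-3$; even if your finiteness lemma held, your argument would be a reduction to Gaifullin rather than a self--contained proof. The paper's genus induction avoids all of this: it never needs any finiteness of the homology of $\Mod_{(k)}(S_g)$ below the top degree, only the vanishing above $2g-3$ coming from $\cd(\K_g)=2g-3$.
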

\bn Theorem \ref{infgenthm} is a partial answer to the following question (see, e.g., Margalit \cite[Question 5.6]{Margalitproblems}).

\begin{question}\label{genquestion}
For which choices of natural numbers $g, k, j$ and commutative ring $R$ is the $R$--module $H_j(\Mod_{(k)}(S_g);R)$ finitely generated?
\end{question}

\p{Prior partial answers to Question \ref{genquestion}} Mess showed that $\cI_2$ is an infinitely generated free group \cite{Messfree}, which answers Question \ref{genquestion} for all $j, k$ and $R$ when $g = 2$.  Johnson showed that $\cI_g$ is finitely generated for $g \geq 3$, which answers Question \ref{genquestion} for $j = k = 1$ and any $R$ in the case that $g \geq 3$ \cite{JohnsonIII}.  Church--Ershov--Putman \cite[Theorem C]{CEP} showed that for $g \geq 2k - 1$, $\Mod_{(k)}(S_g)$ is finitely generated, which answers the question for $j = 1$ and any $R$ when $g \geq 2k - 1$.  In contrast, Bestvina--Bux--Margalit showed that $H_{3g-5}(\cI_g;\Z)$ is infinitely generated for all $g$ \cite[Theorem C]{BBM}.  Gaifullin showed that $H_{j}(\cI_g;\Z)$ is infinitely generated for $2g - 3 \leq j \leq 3g - 5$ \cite{Gaifullin}, and also showed that $H_{2g-3}(\K_g;\Z)$ is infinitely generated \cite{Gaifullinjohnson}.  

\p{The strategy of the proof of Theorem \ref{infgenthm}} Our proof of Theorem \ref{infgenthm} follows the same strategy of Bestvina--Bux--Margalit \cite{BBM}.  Gaifullin also used a variant of this approach to prove that $H_{2g-3}(\K_g;\Z)$ is infinitely generated \cite{Gaifullinjohnson}.  Bestvina--Bux--Margalit \cite{BBM} define a complex $\calB(S_g)$ called the complex of cycles.  They consider the action of $\K_g$ on $\calB(S_g)$ and the associated equivariant homology spectral sequence $\bE_{p,q}^r$ (see, e.g., \cite[Section VII.7]{Brownbook}).  The strategy is to apply the following inductive process for any $k \geq 3$:
\begin{enumerate}
\item Mess proved that $H_1(\Mod_{(k)}(S_2);\Z)$ is infinitely generated \cite{Messfree}.
\item We will prove that $H_{2(g-1) - 3}(\Mod_{(k)}(S_{g-1});\Z)$ infinitely generated implies that 
\begin{displaymath}H_{2g-3}(\Stab_c(\Mod_{(k)}(S_g));\Z)
\end{displaymath}
\bn is infinitely generated for $c \subseteq S_g$ a nonseparating simple closed curve.
\item Bestvina--Bux--Margalit \cite[Proposition 5.2]{BBM} show that, for any $k \geq 2$, there is an inclusion 
\begin{displaymath}
\bE_{0,2g-3}^1 \hookrightarrow H_{2g-3}(\Mod_{(k)}(S_g);\Z).
\end{displaymath}
\bn  By construction, the complex $\calB(S_g)$ contains a vertex represented by a nonseparating simple closed curve $c \subseteq S_g$.  By examining the spectral sequence $\bE_{p,q}^r$, we can show that there is a sequence of inclusions
\begin{displaymath}
H_{2g-3}(\Stab_c(\Mod_{(k)}(S_g));\Z) \hookrightarrow \bE_{0,2g-3}^1 \hookrightarrow H_{2g-3}(\Mod_{(k)}(S_g);\Z).
\end{displaymath}
\bn By Step (2), $H_{2g-3}(\Stab_c(\Mod_{(k)}(S_g));\Z)$ is infinitely generated.  Since the composition of injections is an injection, $H_{2g-3}(\Mod_{(k)}(S_g);\Z)$ is infinitely generated as well.  
\end{enumerate}

\p{The outline of the paper} In Section \ref{birmansection}, we will construct a variant of the Birman exact sequence for the Johnson filtration.  In Section \ref{homolsection}, we will prove in Proposition \ref{birmanhomolprop} that for certain vertices $c \in \calB(S_g)$, if Theorem~\ref{infgenthm} holds for $g' < g$ and $k \geq 3$, then $H_{2g-3}(\Stab_v(\Mod_{(k)}(S_g);\Z))$ is infinitely generated for every $k \geq 3$.  To do this, we will apply the Leray--Serre spectral sequence to the Birman exact sequence constructed Section \ref{birmansection}.  In Section \ref{mainsection}, we prove Theorem \ref{infgenthm} by the inductive process described above.  We then use Theorem \ref{infgenthm} to prove Theorem \ref{mainthm}.

\p{Acknowledgments} The author would like to thank his adviser Dan Margalit for suggesting this problem and for many helpful mathematical discussions.  The author would like to thank Lei Chen pointing out an error in an earlier version of Section \ref{birmansection}.

\section{Birman exact sequences for the Johnson filtration}\label{birmansection}

Our goal in this section is to prove Proposition \ref{oneboundsymplemma} below, which gives a variant of the Birman exact sequence for the Johnson filtration and verifies that this variant satisfies certain properties.

\p{Curves on a surface} For the remainder of this paper, a curve $c$ on a surface $S$ is an isotopy class of essential, simple, closed curves.  

\p{Partitioned surfaces} Let $S = S_{g}^b$ be an orientable compact surface of genus $g$ with $b$ boundary components. Putman \cite{Putmantorelli} defined the notion of a \textit{partitioned surface} $\Sigma = (S,P)$, which is a surface $S$ equipped with a partition $P$ of $\pi_0(\partial S)$.  Suppose that $S \subseteq S_h$, and this embedding has the property that for each connected component $S' \in \pi_0(S_h \setminus \Int(S))$, there is some $p \in P$ such that $\partial S' = p$.  Additionally, suppose that no connected component of $S_h \setminus \Int(S)$ is a disk or annulus.  The Torelli group $\cI(\Sigma)$, alternatively notated as $\cI(S,P)$, is the subgroup of $\Mod(S)$ generated by all elements $\varphi \in \cI_h$ such that $\varphi$ is supported on $S \subseteq S_h$.  Putman~\cite[Theorem 1.1]{Putmantorelli} showed that $\cI(S,P)$ is well--defined, i.e., independent of the choice of embedding $S \subseteq S_h$.  Later, Church \cite[Theorem 4.6]{Churchorbit} showed that the Johnson filtration of $\Sigma$, denoted $\Mod_{(k)}(\Sigma)$ or $\Mod_{(k)}(S,P)$, is also independent of the choice of embedding $S \subseteq S_h$.  Now, let $\Sigma = (S,P)$ be a partitioned surface, and let $\Sigma' = (S',P')$ be another partitioned surface given by capping off one boundary component $b \in S$ with a disk and forgetting the corresponding boundary component from the partition $P$.  Let $\UT S'$ denote the unit tangent bundle of $S'$, let $* \in S'$ be a basepoint, and let $\widetilde{*} \in \UT S'$ be a lift of $*$ to $\UT S'$.  Putman \cite[Theorem 1.2]{Putmantorelli} showed that the Birman exact sequence \cite{FarbMarg}
\begin{displaymath}
1 \rightarrow \pi_1(\UT S',\widetilde{*}) \rightarrow \Mod(S) \rightarrow \Mod(S') \rightarrow 1
\end{displaymath}
\bn restricts to a short exact sequence
\begin{displaymath}
1 \rightarrow \Gamma \rightarrow \cI(\Sigma) \rightarrow \cI(\Sigma') \rightarrow 1.
\end{displaymath}
\bn The subgroup $\Gamma \subseteq \pi_1(\UT S', *)$ is constructed as follows.  If $\varphi:H \rightarrow K$ is a homomorphism of groups, the \textit{graph of $\varphi$}, denoted $\Gamma(\varphi)$, is the subgroup of $H \oplus K$ generated by elements of the form $(h, \varphi(h))$.  There is a split short exact sequence
\begin{displaymath}
1 \rightarrow \Z \rightarrow \pi_1(\UT S', *') \rightarrow \pi(S', *) \rightarrow 1,
\end{displaymath}
\bn and the group $\Gamma$ in Putman's Birman exact sequence for the Torelli group is $\Gamma(\kappa)$ for some function $\kappa:F_1 \rightarrow \Z$, where $F_1 \subseteq \pi_1(S',*)$.  We will extend the Birman exact sequence for the Torelli group to $\Mod_{(k)}(\Sigma)$ in the following special case.

\begin{prop}\label{oneboundsymplemma}
Let $g \geq 3$.  Let $\Sigma_1 = (S_{g-1}^1, \pi_0(\partial S_{g-1}^1))$ and $\Sigma_2 = (S_{g-1}^2, \pi_0(\partial S_{g-1}^2))$.  Choose a boundary component $p \subseteq \partial S_{g-1}^2$, and let $\rho_1:\cI(\Sigma_2) \rightarrow \cI(\Sigma_1)$ be the map in Putman's Birman exact sequence for the Torelli group.  Choose a basepoint $* \in S_{g-1}^1$ and a lift of this basepoint to $\widetilde{*} \in \UT S_{g-1}^1$.  Let $\kappa: \pi_1(S_{g-1}^1,*) \rightarrow \Z$ be a map such that there is a subgroup $F_1 \subseteq \pi_1(S_{g-1}^1,*)$ with $\Gamma(\kappa|_{F_1}) \subseteq \pi_1(\UT S_{g-1}^1, \widetilde{*})$ equal to $\ker(\rho_1)$.  For any $k \geq 3$, there a subgroup $F_k \subseteq \left[\pi_1(S_{g-1}^1,*), \pi_1(S_{g-1}^1,*)\right]$ such that there is a short exact sequence
\begin{displaymath}
1 \rightarrow \Gamma(\kappa|_{F_k}) \rightarrow \Mod_{(k)}(\Sigma_2) \xrightarrow{\rho_k} \Mod_{(k)}(\Sigma_1)\rightarrow 1,
\end{displaymath}
\bn given by restricting the Birman exact sequence.  Furthermore, for any $k \geq 3$, the group $F_k$ has the following properties:

\begin{enumerate}
\item the quotient group $F_k/F_{k+1}$ is nontrivial, finitely generated and free abelian,
\item the induced action of $\Mod_{(k)}(S_{g-1,1})$ on $F_k/F_{k+1}$ is trivial, and
\item the quotient group $\pi_1(S_{g-1}^1,*)/F_k$ is solvable and non--cyclic.  
\end{enumerate}
\end{prop}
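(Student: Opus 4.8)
The plan is to build the subgroups $F_k$ from the data of the lower central series of $\pi_1(S_{g-1}^1, *)$ and to check that restricting the Birman exact sequence to $\Mod_{(k)}(\Sigma_2)$ produces the claimed sequence. I would start by recalling that $\Mod_{(k)}(\Sigma_2)$ sits inside $\Mod(S_{g-1}^2)$, so I can intersect it with the kernel of $\Mod(S_{g-1}^2) \to \Mod(S_{g-1}^1)$; this kernel is $\pi_1(\UT S_{g-1}^1, \widetilde{*})$ by the classical Birman exact sequence. Thus $\ker(\rho_k) := \Mod_{(k)}(\Sigma_2) \cap \pi_1(\UT S_{g-1}^1, \widetilde{*})$ is the natural candidate for $\Gamma(\kappa|_{F_k})$, and I would \emph{define} $F_k$ to be the image of $\ker(\rho_k)$ under the projection $\pi_1(\UT S_{g-1}^1) \to \pi_1(S_{g-1}^1)$. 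The split exact sequence $1 \to \Z \to \pi_1(\UT S_{g-1}^1) \to \pi_1(S_{g-1}^1) \to 1$ together with the fact (from Putman's theorem, the $k=1$ case) that $\ker(\rho_1) = \Gamma(\kappa|_{F_1})$ meets the central $\Z$ trivially would let me argue inductively that $\ker(\rho_k)$ also meets $\Z$ trivially, hence is literally the graph $\Gamma(\kappa|_{F_k})$ of the restriction of $\kappa$. The genuine content is then to identify $F_k$ intrinsically inside $\pi_1(S_{g-1}^1)$: I expect $F_k = F_1 \cap \pi_1^{(k-1)}(S_{g-1}^1)$ or something very close to it, the point being that a mapping class supported near a point ``lies deeper in the Johnson filtration'' exactly when the corresponding loop lies deeper in the lower central series of $\pi_1$. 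This matching between the Johnson filtration of the point-pushing subgroup and the lower central series of $\pi_1$ is the standard fact underlying Johnson's work and is the right tool here; I would cite the relevant computation (essentially the Magnus-expansion / Fox-calculus description of how point-pushing acts on $\pi_1/\pi_1^{(k)}$) rather than redo it.

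For exactness of $1 \to \Gamma(\kappa|_{F_k}) \to \Mod_{(k)}(\Sigma_2) \xrightarrow{\rho_k} \Mod_{(k)}(\Sigma_1) \to 1$, the only nonformal point is surjectivity of $\rho_k$: given $\psi \in \Mod_{(k)}(\Sigma_1)$, I need to lift it to $\Mod_{(k)}(\Sigma_2)$. I would do this by lifting $\psi$ to \emph{some} $\tilde\psi \in \Mod(S_{g-1}^2)$ via the (split, or at least surjective) classical Birman sequence, observing that $\tilde\psi$ automatically lies in $\cI(\Sigma_2)$ by Putman's result, and then checking that its image in $\Out(\pi_1/\pi_1^{(k)})$ of the capped surface is already trivial because it was trivial downstairs — i.e. capping off a boundary component does not see the difference between the $k$th Johnson terms in the relevant range. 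Concretely, the action of $\tilde\psi$ on $\pi_1(S_{g-1}^2)/\pi_1^{(k)}$ differs from the action of $\psi$ on $\pi_1(S_{g-1}^1)/\pi_1^{(k)}$ only by the point-pushing correction, which can be absorbed; this is where one uses that $\Gamma(\kappa|_{F_k})$ consists of point-pushes along loops in $[\pi_1,\pi_1]$, so their action on $\pi_1/\pi_1^{(k)}$ is by inner automorphisms of a quotient that is (essentially) central, hence trivial in $\Out$.

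The three enumerated properties I would verify as follows. For (1): $F_k/F_{k+1}$ is a subquotient of $\pi_1^{(k-1)}(S_{g-1}^1)/\pi_1^{(k)}(S_{g-1}^1)$, which for a surface group is finitely generated free abelian (these are the graded pieces of the free-nilpotent-ish associated graded of a surface group, all torsion-free and of finite rank), so I only need nontriviality — and nontriviality follows because $F_1$ has finite index / finite corank considerations force $F_k \supsetneq F_{k+1}$ for each $k$, or more directly because $\kappa$ is nonzero on arbitrarily deep commutators. For (2): the conjugation action of $\Mod_{(k)}(S_{g-1}^1)$ on $F_k/F_{k+1}$ factors through the action on $\pi_1^{(k-1)}/\pi_1^{(k)}$, and by definition an element of $\Mod_{(k)}$ acts trivially on $\pi_1/\pi_1^{(k+1)}$, hence trivially on this graded piece; I should be a little careful about $\Out$ versus $\Aut$ and basepoints, but the inner ambiguity acts trivially on the abelian graded piece so it is harmless. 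For (3): $\pi_1(S_{g-1}^1)/F_k$ surjects onto $\pi_1(S_{g-1}^1)/\pi_1^{(k)}(S_{g-1}^1)$-ish quotient which is nilpotent hence solvable, and it is non-cyclic because $g-1 \geq 2$ forces $H_1$ to have rank $\geq 4$. The step I expect to be the main obstacle is the precise identification of $F_k$ and the surjectivity of $\rho_k$: getting the bookkeeping exactly right between the point-pushing subgroup's own Johnson-type filtration and the lower central series of $\pi_1$, in a way that is uniform in $k \geq 3$ and compatible with Putman's $k=1$ normalization, is the delicate part; everything after that is bookkeeping with graded pieces of surface groups.
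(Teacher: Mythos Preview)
Your outline diverges from the paper's argument at the central step, and the divergence is exactly where you flag the ``delicate part.'' You aim to identify $F_k$ explicitly as (essentially) $F_1 \cap \pi_1^{(k-1)}(S_{g-1}^1)$ and then read off properties (1)--(3) from the graded pieces $\pi_1^{(k-1)}/\pi_1^{(k)}$ of the lower central series. The paper never attempts this identification. Instead it works entirely with the quotients $A_{k,\Sigma} = \Mod_{(k)}(\Sigma)/\Mod_{(k+1)}(\Sigma)$: Morita's higher Johnson homomorphisms show $A_{k,h}$ is finitely generated free abelian for a closed surface, and Church's theorem forces $A_{k,\Sigma}$ to inject into $A_{k,h}$. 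A $3\times 3$ diagram of short exact sequences then exhibits $F_k/F_{k+1}$ as the kernel of $A_{k,\Sigma_2} \to A_{k,\Sigma_1}$, giving property (1) immediately and property (2) from the fact that $A_{k,\Sigma_2}$ is abelian (so conjugation is trivial) together with the splitting of $\rho_k$. Property (3) then follows because the filtration $F_1 \supseteq F_2 \supseteq \cdots$ has abelian graded pieces.

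The genuine gap in your proposal is that your arguments for (1) and (2) both presuppose the identification you have not proved. To say $F_k/F_{k+1}$ is a subquotient of $\pi_1^{(k-1)}/\pi_1^{(k)}$ you need not only $F_k \subseteq \pi_1^{(k-1)}$ but also $F_{k+1} \supseteq F_k \cap \pi_1^{(k)}$, and the reference you plan to cite (the Akin--type computation) treats the \emph{punctured} Birman sequence, not the bounded case with the discrete partition on $\partial S_{g-1}^2$; the translation is not automatic. Similarly, your argument for (2) routes through the action on $\pi_1^{(k-1)}/\pi_1^{(k)}$ and so again needs the identification. (A small slip: $\Mod_{(k)}$ acts trivially on $\pi_1/\pi_1^{(k)}$, not $\pi_1/\pi_1^{(k+1)}$; your conclusion survives, but only once the embedding of $F_k/F_{k+1}$ is in hand.) Finally, your surjectivity argument is more laborious than necessary: the paper simply observes that an embedding $\iota:S_{g-1}^1 \hookrightarrow S_{g-1}^2$ induces a section $\iota_*$ of $\rho$, and Church's theorem gives $\Mod_{(k)}(\Sigma_1) = \iota_*^{-1}(\Mod_{(k)}(\Sigma_2))$, so surjectivity of $\rho_k$ is immediate.
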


\bn There are several parts to the proof of Proposition \ref{oneboundsymplemma}.  We show in Lemma \ref{beswelldef} that the map $\rho_k$ given by restricting $\rho$ to $\Mod_{(k)}(\Sigma_2)$ has $\im(\rho_k) = \Mod_{(k)}(\Sigma_1)$.  We then use the higher Johnson homomorphisms of Morita to show that the group $F_k$ defined in the statement Proposition \ref{oneboundsymplemma} has properties (1)--(3) given in Proposition \ref{oneboundsymplemma}.  The relationship between $F_k$ and the higher Johnson homomorphisms is described in Lemma \ref{higherjohnsonlemmaprime}.  Properties (1) and (2) are verified in Lemma \ref{higherjohnsonlemma}, and property (3) is verified in the proof of Proposition \ref{oneboundsymplemma}.

\begin{lemma}\label{beswelldef}
Let $g \geq 3$ and $k \geq 2$.  Let $\Sigma_1 = (S_{g-1}^1, \pi_0(\partial S_{g-1}^1))$ and $\Sigma_2 = (S_{g-1}^2, \pi_0(\partial S_{g-1}^2))$.  Choose a boundary component $p \subseteq \partial S_{g-1}^2$.  Let $\rho:\Mod(S_{g-1}^2) \rightarrow \Mod(S_{g-1}^1)$ be the map in the Birman exact sequence given by capping off $p$ with a disk.  Let $\rho_k$ denote the restriction of $\Mod(S_{g-1}^2)$ to $\Mod_{(k)}(\Sigma_2)$.  Then $\im(\rho_k) = \Mod_{(k)}(\Sigma_1) \subseteq \Mod(S_{g-1}^1)$.  
\end{lemma}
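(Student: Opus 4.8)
The plan is to prove the two inclusions $\Mod_{(k)}(\Sigma_1) \subseteq \im(\rho_k)$ and $\im(\rho_k) \subseteq \Mod_{(k)}(\Sigma_1)$ separately. For the first inclusion, I would lift: given $\varphi \in \Mod_{(k)}(\Sigma_1)$, the full Birman exact sequence $1 \to \pi_1(\UT S_{g-1}^1) \to \Mod(S_{g-1}^2) \xrightarrow{\rho} \Mod(S_{g-1}^1) \to 1$ guarantees some $\widetilde{\varphi} \in \Mod(S_{g-1}^2)$ with $\rho(\widetilde{\varphi}) = \varphi$; the point is to choose the lift so that $\widetilde{\varphi} \in \Mod_{(k)}(\Sigma_2)$. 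Since Putman's theorem already gives $\rho(\cI(\Sigma_2)) = \cI(\Sigma_1)$ with kernel $\Gamma(\kappa|_{F_1}) \subseteq \pi_1(\UT S_{g-1}^1)$, and $\Mod_{(k)}(\Sigma_1) \subseteq \cI(\Sigma_1)$, I can first pick a lift inside $\cI(\Sigma_2)$. Then I would use the characterization of the Johnson filtration by the action on $\pi_1$: embed $S_{g-1}^2 \subseteq S_h$ so that capping $p$ corresponds to an embedding $S_{g-1}^1 \subseteq S_h$, and compare the actions of $\widetilde{\varphi}$ and $\varphi$ on $\pi_1(S_h)/\pi_1^{(k)}(S_h)$. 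The actions differ only by the point-pushing/disk-capping data, which lies in the image of $\pi_1(\UT S_{g-1}^1)$; so modifying $\widetilde{\varphi}$ by an appropriate element of $\Gamma(\kappa|_{F_1}) = \ker(\rho|_{\cI(\Sigma_2)})$ — which I expect to be able to arrange to lie in $\Mod_{(k)}(\Sigma_2)$ once $k$ is chosen large enough relative to how deep the correction sits in the lower central series — produces a lift in $\Mod_{(k)}(\Sigma_2)$.

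For the reverse inclusion $\im(\rho_k) \subseteq \Mod_{(k)}(\Sigma_1)$, I would argue functorially. Capping off $p$ with a disk induces a surjection $\pi_1(S_{g-1}^2) \to \pi_1(S_{g-1}^1)$ which is compatible with the embeddings into a closed surface $S_h$ (capping $p$ inside $S_h$ does not change $S_h$, it just enlarges the subsurface), so any mapping class supported on $S_{g-1}^2$ that acts trivially on $\pi_1(S_h)/\pi_1^{(k)}(S_h)$ descends to one supported on $S_{g-1}^1$ acting trivially on the same quotient. Concretely: if $\widetilde{\varphi} \in \Mod_{(k)}(\Sigma_2)$ then $\widetilde{\varphi}$ acts trivially on $\pi_1(S_h)/\pi_1^{(k)}(S_h)$ for the ambient closed surface; since $\rho(\widetilde{\varphi})$ is represented by the same diffeomorphism (just viewed on the capped subsurface) inside the \emph{same} $S_h$, it also acts trivially on $\pi_1(S_h)/\pi_1^{(k)}(S_h)$, hence $\rho(\widetilde{\varphi}) \in \Mod_{(k)}(\Sigma_1)$ by Church's embedding-independent description of $\Mod_{(k)}$ of a partitioned surface.

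The main obstacle I anticipate is the surjectivity direction, specifically controlling \emph{where in the lower central series} the discrepancy between a naive lift and a genuine element of $\Mod_{(k)}(\Sigma_2)$ lives, and showing the correcting element can be taken inside $\ker(\rho) \cap \Mod_{(k)}(\Sigma_2)$. This requires knowing that $\ker(\rho_k) = \ker(\rho) \cap \Mod_{(k)}(\Sigma_2)$ is large enough — essentially a preliminary identification of $\ker(\rho_k)$ with $\Gamma(\kappa|_{F_k})$ for a suitable $F_k \subseteq [\pi_1(S_{g-1}^1,*), \pi_1(S_{g-1}^1,*)]$, which is exactly the content that Proposition \ref{oneboundsymplemma} is organizing. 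So I would either prove surjectivity using only the coarse fact that $\ker(\rho)$ meets every term of the Johnson filtration (via the point-pushing subgroup being residually nilpotent and cofinal, since $\pi_1(\UT S_{g-1}^1)$ is residually nilpotent), or defer the sharp statement to the later lemmas. The residual nilpotence of $\pi_1(\UT S_{g-1}^1)$ together with the fact that point-pushing along commutators lands deep in the Johnson filtration (a standard consequence of the Johnson-homomorphism interpretation of point-pushing) should suffice to push the naive lift into $\Mod_{(k)}(\Sigma_2)$.
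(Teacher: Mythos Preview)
Your argument for $\im(\rho_k)\subseteq\Mod_{(k)}(\Sigma_1)$ is fine and is essentially what underlies the paper's appeal to Church's theorem: once both partitioned surfaces are embedded compatibly in a closed $S_h$, an element of $\Mod_{(k)}(\Sigma_2)$ and its image under $\rho$ are represented by the same diffeomorphism of $S_h$.

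For surjectivity, however, there is a real gap. Your plan is to lift $\varphi\in\Mod_{(k)}(\Sigma_1)$ arbitrarily to some $\widetilde\varphi\in\cI(\Sigma_2)$ and then correct by an element of $\ker(\rho|_{\cI(\Sigma_2)})$ so as to land in $\Mod_{(k)}(\Sigma_2)$. But the existence of such a correcting element is exactly equivalent to the surjectivity you are trying to prove, so the argument is circular as written; vague invocations of residual nilpotence or of ``how deep the correction sits in the lower central series'' do not manufacture the element. Your phrase ``once $k$ is chosen large enough'' is also a red flag, since $k$ is fixed by the statement.

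The paper bypasses all of this with a one-line observation you are missing. The Birman map $\rho$ admits a geometric \emph{section}: choose an embedding $\iota\colon S_{g-1}^1\hookrightarrow S_{g-1}^2$ and let $\iota_*\colon\Mod(S_{g-1}^1)\to\Mod(S_{g-1}^2)$ be extension by the identity on the complement; then $\rho\circ\iota_*=\mathrm{id}$. Church's theorem, applied to the composite embedding $S_{g-1}^1\hookrightarrow S_{g-1}^2\hookrightarrow S_h$, gives $\iota_*^{-1}(\Mod_{(k)}(\Sigma_2))=\Mod_{(k)}(\Sigma_1)$, so $\iota_*$ already carries $\Mod_{(k)}(\Sigma_1)$ into $\Mod_{(k)}(\Sigma_2)$. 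Hence $\iota_*(\varphi)$ is a lift of $\varphi$ lying in $\Mod_{(k)}(\Sigma_2)$ from the start, and no correction step is needed.
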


\begin{proof}
Let $\iota:S_{g-1}^1 \rightarrow S_{g-1}^2$ be an embedding.  Let $\iota_*:\Mod(S_{g-1}^1) \rightarrow \Mod(S_{g-1}^2)$ denote the map given by extending $\varphi \in \Mod(S_{g-1}^1)$ by the identity map on $S_{g-1}^2 \cut \iota(S_{g-1}^1)$.  The map $\iota_*$ splits $\rho$ from the definition of the Birman exact sequence (see, e.g. \cite[Section 4.2]{FarbMarg}).  Furthermore, Church's work on the Johnson filtration \cite[Theorem 4.6]{Churchorbit} says that $\Mod_{(k)}(\Sigma_1) = \iota_*^{-1}(\Mod_{(k)}(\Sigma_2))$.  Since $\iota_*$ splits $\rho$, we therefore have $\im(\rho_k) = \Mod_{(k)}(\Sigma_2)$.
\end{proof}

\bn We now study the properties of the groups $F_k = \ker(\rho_k)$.

\p{Higher Johnson homomorphisms} Let $h \geq g$.  For each $j \geq 1$, there are maps $\tau_{j,h}:\Mod_{(j)}(S_h) \rightarrow A_{j,h}$ called the \textit{higher Johnson homomorphisms}, such that:
\begin{itemize}
\item the group $A_{j,h}$ is finitely generated and free abelian group, and
\item $\ker(\tau_{j,h}) = \Mod_{(j+1)}(S_h)$.  
\end{itemize}

\bn For $j = 1$ these maps were constructed by Johnson \cite{Johnsonhomomorphism}, and for $j \geq 2$ these were constructed by Morita \cite[Section 2]{Moritahomomorphism}.  We  use the higher Johnson homomorphisms to prove the following result.

\begin{lemma}\label{higherjohnsonlemmaprime}
Let $\Sigma = (S_g^b, P)$ be a partitioned surface with $g \geq 2$, and let $k \geq 1$.  Let $A_{k,\Sigma}$ be the cokernel of the inclusion map
\begin{displaymath}
\Mod_{(k+1)}(\Sigma) \rightarrow \Mod_{(k)}(\Sigma).
\end{displaymath}
\bn The group $A_{k,\Sigma}$ is non--trivial, finitely generated and free abelian.  
\end{lemma}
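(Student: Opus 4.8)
The plan is to reduce the statement to the classical higher Johnson homomorphisms $\tau_{k,h}$ on a closed surface and then track how partitioned surfaces sit inside closed ones. First I would recall that, by definition, $A_{k,\Sigma}$ is the abelian group $\Mod_{(k)}(\Sigma)/\Mod_{(k+1)}(\Sigma)$; indeed one should first check this quotient is abelian, which follows from the standard fact that $[\Mod_{(k)}, \Mod_{(\ell)}] \subseteq \Mod_{(k+\ell)}$ applied with $\ell = k$ (or even $\ell = 1$), a commutator identity that also holds for the relative versions because $\Mod_{(k)}(\Sigma)$ is defined as a subgroup of some closed $\Mod_{(k)}(S_h)$ supported on $S_g^b$. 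So $A_{k,\Sigma}$ is at least an abelian group, and the content is that it is nontrivial and finitely generated free abelian.

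For the finite generation and freeness, I would fix an embedding $S_g^b \hookrightarrow S_h$ realizing the partition $P$, which by Church \cite[Theorem 4.6]{Churchorbit} identifies $\Mod_{(k)}(\Sigma)$ with $\iota_*^{-1}(\Mod_{(k)}(S_h)) \cap \Mod(S_g^b)$ — more precisely, $\Mod_{(k)}(\Sigma) = \Mod(S_g^b) \cap \Mod_{(k)}(S_h)$ under the inclusion $\Mod(S_g^b) \hookrightarrow \Mod(S_h)$. Restricting Morita's homomorphism $\tau_{k,h} : \Mod_{(k)}(S_h) \to A_{k,h}$ to $\Mod_{(k)}(\Sigma)$ gives a homomorphism whose kernel is $\Mod_{(k)}(\Sigma) \cap \Mod_{(k+1)}(S_h) = \Mod_{(k+1)}(\Sigma)$, again by Church's theorem. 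Hence $A_{k,\Sigma}$ injects into the finitely generated free abelian group $A_{k,h}$, so it is itself finitely generated free abelian (a subgroup of $\Z^n$ is $\Z^m$ for some $m \le n$). The freeness is thus automatic once we have the embedding into $A_{k,h}$.

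The main obstacle is nontriviality: one must produce an element of $\Mod_{(k)}(\Sigma)$ not in $\Mod_{(k+1)}(\Sigma)$, i.e. a mapping class supported on $S_g^b$ lying in the $k$th but not the $(k+1)$st Johnson term. For this I would use the standard construction via separating twists and their higher analogues: the Johnson filtration on a surface with boundary of genus $g \ge 2$ contains, for each $k$, an element given by a ``$k$-fold commutator'' of bounding-pair maps or by the image of an appropriate element of $\pi_1^{(k)}$ under a point-pushing-type or partitioned Birman-type map, and one checks that its image under $\tau_{k,h}$ is nonzero using Morita's description of $\tau_{k,h}$ in terms of the lower central series quotients of $\pi_1$. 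Concretely, since $g \ge 2$, there is enough room on $S_g^b$ to find a genus-$2$ subsurface with connected complement; the nontriviality of $A_{k,\cdot}$ for a genus-$2$ surface (with one or two boundary components) is known, e.g. by Morita's computations \cite[Section 2]{Moritahomomorphism}, and an element witnessing it on that subsurface also witnesses nontriviality of $A_{k,\Sigma}$ because the inclusion of mapping class groups is compatible with the Johnson filtrations by Church \cite[Theorem 4.6]{Churchorbit}. I would phrase this last step as: choose a subsurface $S_2^1 \subseteq S_g^b$ with connected complement of genus $\ge 1$, note that the partitioned surface $(S_2^1, \pi_0(\partial S_2^1))$ embeds compatibly, and invoke the known nontriviality of Morita's higher Johnson homomorphism on genus-$2$ surfaces together with the equality $\Mod_{(k)}(S_2^1, P_2) = \Mod(S_2^1) \cap \Mod_{(k)}(S_g^b)$. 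This gives the required $\varphi \in \Mod_{(k)}(\Sigma) \setminus \Mod_{(k+1)}(\Sigma)$, completing the proof.
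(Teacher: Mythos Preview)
Your argument is correct and matches the paper's approach: both embed $A_{k,\Sigma}$ into $A_{k,h}$ by invoking Church's identification $\Mod_{(k)}(\Sigma) = \iota_*^{-1}(\Mod_{(k)}(S_h))$ and then use that $A_{k,h}$ is finitely generated free abelian (Johnson for $k=1$, Morita for $k \geq 2$). The paper packages this as a morphism of short exact sequences whose left square is a pullback, yielding injectivity of $A_{k,\Sigma} \to A_{k,h}$ by a diagram chase; you phrase it as restricting $\tau_{k,h}$ and computing its kernel on $\Mod_{(k)}(\Sigma)$ --- these are the same computation.

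Two remarks. First, your preliminary check that $A_{k,\Sigma}$ is abelian via $[\Mod_{(k)},\Mod_{(\ell)}] \subseteq \Mod_{(k+\ell)}$ is fine but unnecessary: once $A_{k,\Sigma}$ injects into the abelian group $A_{k,h}$, abelianness is automatic (and normality of $\Mod_{(k+1)}(\Sigma)$ in $\Mod_{(k)}(\Sigma)$, needed to form the quotient at all, follows immediately from Church's description as a preimage of a normal subgroup). Second, you supply a nontriviality argument via a genus-$2$ subsurface, whereas the paper's written proof actually does not address nontriviality at all --- so on that point you are more complete than the paper.
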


\begin{proof}
Let $\iota:\Sigma \rightarrow S_h$ be a map of partitioned surfaces \cite{Putmantorelli} such that no connected component of $S_h \cut \iota(S_g^b)$ is a disk or annulus.  We have a morphism of short exact sequences

\begin{center}
\begin{tikzcd}
1 \arrow[r] & \Mod_{(k+1)}(S_h) \arrow[r] & \Mod_{(k)}(S_h) \arrow[r] & A_{k,h} \arrow[r] & 1\\
1 \arrow[r] & \Mod_{(k+1)}(\Sigma) \arrow[r]\arrow[u, "\iota_*"] & \Mod_{(k)}(\Sigma) \arrow[r]\arrow[u, "\iota_*"] & A_{k,\Sigma} \arrow[r]\arrow[u] & 1\\
\end{tikzcd}
\end{center}

\bn By the work of Putman \cite[Theorem 1.1]{Putmantorelli} for the case $k = 1$ and Church for $k \geq 2$ \cite[Theorem 4.6]{Churchorbit} we have $\Mod_{(k)}(\Sigma) = \iota_*^{-1}(\Mod_{(k)}(S_h))$, and similarly for $k + 1$.  In particular, this implies that the left square in the diagram is a pullback square, so the right vertical arrow is injective by a diagram chase.  Since $A_{k,h}$ is finitely generated and free abelian by Johnson \cite{Johnsonhomomorphism} for $k = 1$ and Morita \cite{Moritahomomorphism} for $k \geq 2$, we see that $A_{k,\Sigma}$ is finitely generated and free abelian as well.
\end{proof}

We now prove that $F_k = \ker(\rho_k)$ satisfies properties (1) and (2) in Proposition \ref{oneboundsymplemma}.

\begin{lemma}\label{higherjohnsonlemma}
Let $g \geq 3$ and $k \geq 1$.  Let $\Sigma_1 = (S_{g-1}^1, \pi_0(\partial S_{g-1}^1))$ and $\Sigma_2 = (S_{g-1}^2, \pi_0(\partial S_{g-1}^2))$.  Choose a boundary component $p \subseteq \partial S_{g-1}^2$, and let $\rho:\Mod(S_{g-1}^2) \rightarrow \Mod(S_{g-1}^1)$ be the map in the Birman exact sequence.  Let $\rho_k$ denote the restriction of $\rho$ to $\Mod_{(k)}(\Sigma_2)$.  Let $\kappa:\pi_1(S_{g-1}^1,*) \rightarrow \Z$ be a map such that $\ker(\rho_1) = \Gamma(\kappa|_{K})$ for some $K \subseteq \pi_1(S_{g-1}^1,*)$, and let $F_k \subseteq \pi_1(S_{g-1}^1,*)$ be the group such that $\Gamma(\kappa|_{F_k}) = \ker(\rho_k)$.  For any $k \geq 3$, the group $F_k$ satisfies properties (1) and (2) of Proposition \ref{oneboundsymplemma}, namely
\begin{enumerate}
\item the group $F_k/F_{k+1}$ is finitely generated and free abelian and 
\item the action of $\Mod_{(k)}(\Sigma_1)$ on $F_k/F_{k+1}$ is trivial.
\end{enumerate}
\end{lemma}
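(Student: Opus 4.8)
The plan is to identify the groups $F_k$ with the terms of a natural filtration on $\pi_1(S_{g-1}^1,*)$ coming from the Johnson filtration, and then deduce properties (1) and (2) from the corresponding properties of the higher Johnson homomorphisms, already packaged for partitioned surfaces in Lemma~2.5 (higherjohnsonlemmaprime). Concretely, the kernel $\ker(\rho_k)$ sits inside the point-pushing subgroup $\pi_1(\UT S_{g-1}^1,\widetilde *)$ of $\Mod(S_{g-1}^2)$, and via the graph construction $\Gamma(\kappa|_{F_k})$ it is determined by a subgroup $F_k \subseteq \pi_1(S_{g-1}^1,*)$ (projecting off the central $\Z$). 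So the first step is to pin down exactly which elements of $\pi_1(S_{g-1}^1,*)$ lie in $F_k$: an element $x$ lies in $F_k$ precisely when the corresponding point-pushing mapping class $\mathrm{Push}(x)$ (suitably framed) lies in $\Mod_{(k)}(\Sigma_2)$. I would make this precise using that under the Birman exact sequence $1 \to \pi_1(\UT S_{g-1}^1)\to \Mod(S_{g-1}^2)\to\Mod(S_{g-1}^1)\to 1$ the subgroup $\Mod_{(k)}(\Sigma_2)$ maps onto $\Mod_{(k)}(\Sigma_1)$ (Lemma~2.4, beswelldef), so $\ker(\rho_k) = \Mod_{(k)}(\Sigma_2)\cap \pi_1(\UT S_{g-1}^1)$.

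The second step is the key identification: $F_k$ should coincide with (the intersection with the relevant subgroup $K$ of) the lower central series term $\pi_1^{(k-1)}(S_{g-1}^1)$, or more precisely with the preimage under point-pushing of $\Mod_{(k)}(\Sigma_2)$, and this preimage is exactly controlled by how point-pushing interacts with the Johnson filtration. The mechanism is standard: pushing along a curve $x$ acts on $\pi_1$ of the capped surface by a product of conjugations determined by $x$, and this induces the trivial action modulo $\pi_1^{(k)}$ iff $x \in \pi_1^{(k-1)}$. This is precisely the statement that the point-pushing subgroup meets the Johnson filtration in the lower central series, which is essentially Morita's computation (and appears in Church's work on partitioned surfaces cited above). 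Granting this, $F_k = K \cap \pi_1^{(k-1)}(S_{g-1}^1)$, so $F_k/F_{k+1}$ is a subquotient of $\pi_1^{(k-1)}(S_{g-1}^1)/\pi_1^{(k)}(S_{g-1}^1)$, which is finitely generated and free abelian (it is a graded piece of the free-nilpotent Lie algebra on $H_1$); hence property (1) follows.

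For property (2), the action of $\Mod_{(k)}(\Sigma_1)$ on $F_k/F_{k+1}$ is induced from its action on $\pi_1^{(k-1)}(S_{g-1}^1)/\pi_1^{(k)}(S_{g-1}^1)$, and by the very definition of the Johnson filtration, $\Mod_{(k)}(\Sigma_1)$ acts trivially on $\pi_1/\pi_1^{(k)}$, hence trivially on every graded piece $\pi_1^{(j)}/\pi_1^{(j+1)}$ for $j \leq k-1$, in particular on $F_k/F_{k+1}$. One subtlety to check is that the action in question — the conjugation action of $\Mod_{(k)}(\Sigma_2)$ on its normal subgroup $\ker(\rho_k)$, transported to an action of $\Mod_{(k)}(\Sigma_1)$ — really is the restriction of the outer action of $\Mod(S_{g-1}^1)$ on $\pi_1(S_{g-1}^1)$ (up to the central $\Z$, on which everything acts trivially since it is the boundary/fiber class). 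This compatibility is built into the Birman exact sequence, so it is routine but should be stated.

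The main obstacle I expect is the second step: cleanly proving that $F_k$ is exactly the lower-central-series term (intersected with $K$), rather than something a priori smaller or larger. The containment $\pi_1^{(k-1)} \cap K \subseteq F_k$ is the easy direction (pushing along a deep loop is deep in the Johnson filtration); the reverse containment — that $x \notin \pi_1^{(k-1)}$ forces $\mathrm{Push}(x) \notin \Mod_{(k)}(\Sigma_2)$ — requires the nondegeneracy of the first Johnson homomorphism restricted to the point-pushing subgroup, i.e. that the composite $\pi_1^{(j)}/\pi_1^{(j+1)} \hookrightarrow \Mod_{(j)}/\Mod_{(j+1)} \xrightarrow{\tau_j} A_j$ is injective. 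I would cite Morita's explicit formula for $\tau_j$ on point-pushing classes (or the analogous statement in Church) to close this gap.
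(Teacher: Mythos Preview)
Your approach is different from the paper's and attempts substantially more than is needed. You aim to explicitly identify $F_k$ as (the intersection with $K$ of) $\pi_1^{(k-1)}(S_{g-1}^1,*)$, and then read off properties (1) and (2) from the known structure of the graded pieces of the lower central series. As you correctly flag, the hard direction of this identification---that $\mathrm{Push}(x)\in\Mod_{(k)}(\Sigma_2)$ forces $x\in\pi_1^{(k-1)}$---amounts to the injectivity of the higher Johnson homomorphism on the image of the point-pushing subgroup. In the closed/once-punctured setting this is exactly Akin's computation, but for the partitioned-surface groups $\Mod_{(k)}(\Sigma_i)$ (which are defined via an auxiliary embedding into some $S_h$) it is not stated in that form in Morita or Church, and you would have to extract it yourself. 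So the obstacle you name is real, and your proposed resolution by citation is optimistic.

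The paper sidesteps this entirely and never identifies $F_k$. Instead it stacks the Birman sequences at levels $k$ and $k+1$ and passes to cokernels, obtaining a $3\times 3$ diagram whose top row is
\[
1 \longrightarrow F_k/F_{k+1} \longrightarrow A_{k,\Sigma_2} \longrightarrow A_{k,\Sigma_1} \longrightarrow 1.
\]
This exhibits $F_k/F_{k+1}$ as a subgroup of $A_{k,\Sigma_2}$, which is finitely generated free abelian by Lemma~\ref{higherjohnsonlemmaprime}; that gives property (1) immediately. For property (2), the paper uses that $\rho_k$ is split (Lemma~\ref{beswelldef}), so $\Mod_{(k)}(\Sigma_1)$ acts on $A_{k,\Sigma_2}$ through conjugation in $\Mod_{(k)}(\Sigma_2)$; but $A_{k,\Sigma_2}$ is abelian, so this conjugation action is trivial, hence trivial on the subgroup $F_k/F_{k+1}$. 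Your route, if completed, would yield strictly more information (an explicit description of $F_k$), but for the lemma as stated the paper's diagram chase is shorter and avoids precisely the injectivity issue you identified as the main obstacle.
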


\begin{proof}
Note that $F_k$ is canonically isomorphic to $\Gamma(\kappa|_{F_k})$ and the action of $\Mod_{(k)}(\Sigma_2)$ on the $\Z$--factor of $\pi_1(\UT S_{g-1}^1, *)$ is trivial, so we will work with $F_k$ instead of $\Gamma(\kappa|_{F_k})$.  Since $\rho_{k+1}$ is the restriction of $\rho_k$ to $\Mod_{(k+1)}(\Sigma_2)$, we have a short exact sequence of short exact sequences given by
\begin{center}
\begin{tikzcd}
& 1 & 1 & 1 & \\
1 \arrow[r] & F_k/F_{k+1} \arrow[r] \arrow[u] & A_{k, \Sigma_2} \arrow[r] \arrow[u] & A_{k, \Sigma_1} \arrow[r] \arrow[u] & 1\\
1 \arrow[r] & F_k \arrow[r]\arrow[u] & \Mod_{(k)}(\Sigma_2) \arrow[r, "\rho_k"]\arrow[u] & \Mod_{(k)}(\Sigma_1) \arrow[r]\arrow[u] & 1\\
1 \arrow[r] & F_{k+1} \arrow[r]\arrow[u] & \Mod_{(k+1)}(\Sigma_2) \arrow[r, "\rho_{k+1}"]\arrow[u] & \Mod_{(k+1)}(\Sigma_1) \arrow[r]\arrow[u] & 1.\\
& 1 \arrow[u] & 1\arrow[u] & 1\arrow[u] & \\
\end{tikzcd}
\end{center}

\bn We verify properties (1) and (2) in turn.

\p{Property (1)} Since $A_{k,\Sigma_2}$ and $A_{k,\Sigma_1}$ are finitely generated free abelian groups by Lemma \ref{higherjohnsonlemmaprime}, the group $F_k/F_{k+1}$ must be a finitely generated free abelian group as well.  

\p{Property (2)} To show that the action by $\Mod_{(k)}(\Sigma_1)$ on $F_k/F_{k+1}$ is trivial, recall from the argument in Lemma \ref{beswelldef} that $\rho_k$ is split.  Since $A_{k,\Sigma_2}$ is an abelian group, the conjugation action of $\Mod_{(k)}(\Sigma_2)$ on $A_{k,\Sigma_2}$ is the trivial action.  The fact that $\rho_k$ is split implies that $\Mod_{(k)}(\Sigma_1)$ acts trivially on $A_{k,\Sigma_2}$.  Since $F_{k}/F_{k+1}$ is a subgroup of $A_{k,\Sigma_2}$, the argument is complete.  
\end{proof}

\bn We are now ready to complete the section.

\medskip

\begin{proof}[Proof of Proposition \ref{oneboundsymplemma}]
By Lemma \ref{beswelldef} the claimed short exact sequence exists, and by Lemma \ref{higherjohnsonlemma} the kernel $F_k$ satisfies properties (1) and (2).  Putman has shown that for $k = 1$, the group $F_1 = [\pi_1(S_{g-1}^1,*), \pi_1(S_{g-1}^1,*)]$ \cite{Putmantorelli}. Therefore, it suffices to show that, for $k \geq 3$, $F_k$ satisfies property (3), namely that $\pi_1(S_{g-1}^1,*)/F_k$ is solvable and non--cyclic.  

\p{Property (3)} By Lemma \ref{higherjohnsonlemma}, the quotient group $\pi_1(S_{g-1}^1, *)/F_{k}$ is filtered by a sequence of subgroups such that the corresponding graded object consists of abelian groups, which in particular implies that $\pi_1(S_{g-1}^1,*)/F_k$ is solvable.  The fact that $\pi_1(S_{g-1}^1, *)/F_{k}$ is non--cyclic follows  the fact that $\pi_1(S_{g-1}^1,*)/F_1 \cong H_1(S_{g-1}^1;\Z)$.  Then $\pi_1(S_{g-1}^1, *)/F_{k} \twoheadrightarrow \pi_1(S_{g-1}^1,*)/F_1) \cong H_1(S_{g-1}^1;\Z)$, so $\pi_1(S_{g-1}^1, *)/F_{k}$ cannot be cyclic.
\end{proof}

\bn We will also record one more fact here, which will be useful in Section \ref{homolsection}.  

\begin{lemma}\label{conjactionlemma}
Let $F_k$ be as in Proposition \ref{oneboundsymplemma}.  Let $\proj: \Gamma(\kappa|_{F_k}) \rightarrow F_k$ denote the isomorphism induced by projection onto the $\pi_1(S_{g-1}^1, *)$--factor.  The pushforward map $\proj_*:H_1(\Gamma(\kappa|_{F_k});\Z) \rightarrow H_1(F_k)$ is an isomorphism of $\Mod_{(k)}(\Sigma_1)$--representations.
\end{lemma}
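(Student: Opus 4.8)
The plan is to note that $\proj$ is already a group isomorphism, so that $\proj_*$ is automatically an isomorphism of abelian groups, and then to put all of the work into $\Mod_{(k)}(\Sigma_1)$--equivariance. Recall that $\pi_1(\UT S_{g-1}^1,\widetilde{*})$ sits in the central extension $1\to\Z\to\pi_1(\UT S_{g-1}^1,\widetilde{*})\xrightarrow{q}\pi_1(S_{g-1}^1,*)\to 1$, and that $\proj$ is the restriction of $q$ to $\Gamma(\kappa|_{F_k})$, which by construction of the graph carries $\Gamma(\kappa|_{F_k})$ isomorphically onto $F_k$. The two relevant module structures are: on $H_1(\Gamma(\kappa|_{F_k});\Z)$, the conjugation action of $\Mod_{(k)}(\Sigma_2)$ on the normal subgroup $\ker(\rho_k)=\Gamma(\kappa|_{F_k})$, descended to $\Mod_{(k)}(\Sigma_2)/\ker(\rho_k)=\Mod_{(k)}(\Sigma_1)$; and on $H_1(F_k;\Z)$, the action of $\Mod_{(k)}(\Sigma_1)$ coming from its action on $\pi_1(S_{g-1}^1,*)$, which preserves $F_k$. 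The goal is to show $\proj_*$ intertwines them.

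First I would trace both structures to a single action of $\Mod_{(k)}(\Sigma_2)$. In the Birman exact sequence $1\to\pi_1(\UT S_{g-1}^1,\widetilde{*})\to\Mod(S_{g-1}^2)\to\Mod(S_{g-1}^1)\to 1$ the subgroup $\pi_1(\UT S_{g-1}^1,\widetilde{*})$ is normal; since the central $\Z$ is characteristic (indeed $\pi_1(\UT S_{g-1}^1,\widetilde{*})\cong\pi_1(S_{g-1}^1,*)\times\Z$ with the first factor free nonabelian), the conjugation action of $\Mod(S_{g-1}^2)$ descends along $q$ to an action on $\pi_1(S_{g-1}^1,*)$ for which $q$ is equivariant; moreover the action on $\Z$ is trivial, as was already used in the proof of Lemma \ref{higherjohnsonlemma}. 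Restricting to $\Mod_{(k)}(\Sigma_2)$ and using $\ker(\rho_k)=\Gamma(\kappa|_{F_k})$ from Proposition \ref{oneboundsymplemma}, the subgroup $\Gamma(\kappa|_{F_k})$ is normal in $\Mod_{(k)}(\Sigma_2)$ and $F_k=q(\Gamma(\kappa|_{F_k}))$ is preserved by the induced action on $\pi_1(S_{g-1}^1,*)$. Passing to first homology, $\Mod_{(k)}(\Sigma_2)$ acts on both $H_1(\Gamma(\kappa|_{F_k});\Z)$ and $H_1(F_k;\Z)$, and $\proj_*$ is $\Mod_{(k)}(\Sigma_2)$--equivariant because $q$ is.

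Next I would check that each of these actions factors through $\Mod_{(k)}(\Sigma_2)/\ker(\rho_k)=\Mod_{(k)}(\Sigma_1)$, which identifies them with the two representations in the statement and finishes the argument. On $H_1(\Gamma(\kappa|_{F_k});\Z)$ this is the standard fact that a normal subgroup acts on its own first homology through inner automorphisms. On $H_1(F_k;\Z)$, an element $\gamma\in\Gamma(\kappa|_{F_k})$ acts on $\pi_1(S_{g-1}^1,*)$ by conjugation by $q(\gamma)\in F_k$, since conjugation descends along $q$; and conjugation by an element of the subgroup $F_k$ restricts to an inner automorphism of $F_k$, hence is trivial on $H_1(F_k;\Z)$. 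Thus both actions descend to $\Mod_{(k)}(\Sigma_1)$, and since $\proj_*$ is equivariant for the actions upstairs it is equivariant for the descended actions.

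The step I expect to be the main obstacle is confirming that the descended action on $H_1(F_k;\Z)$ obtained above is the \emph{same} $\Mod_{(k)}(\Sigma_1)$--action on $H_1(F_k;\Z)$ that will be used in Section \ref{homolsection} — i.e.\ that the conjugation action of $\Mod_{(k)}(\Sigma_2)$ on $\pi_1(\UT S_{g-1}^1,\widetilde{*})$, pushed to $\pi_1(S_{g-1}^1,*)$ and restricted through the splitting $\iota_*$ of Lemma \ref{beswelldef}, agrees with the evident action of $\Mod_{(k)}(\Sigma_1)\subseteq\Mod(S_{g-1}^1)$ on $\pi_1(S_{g-1}^1,*)$, rather than merely with it up to an inner automorphism. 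This is exactly where the choice of unit tangent bundle and of the lift $\widetilde{*}$ of the basepoint are used: a mapping class of $S_{g-1}^1$ extended by the identity fixes the marked point together with its framing, so it acts honestly on $\pi_1(\UT S_{g-1}^1,\widetilde{*})$ and hence on $\pi_1(S_{g-1}^1,*)$, and the splitting $\iota_*$ realizes $\Mod_{(k)}(\Sigma_1)$ inside $\Mod_{(k)}(\Sigma_2)$ compatibly with this. Once that identification is pinned down, the remaining verifications are routine diagram chases.
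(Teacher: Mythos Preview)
Your proposal is correct and follows the same core idea as the paper: the conjugation action of $\Mod_{(k)}(\Sigma_2)$ on $\pi_1(\UT S_{g-1}^1,\widetilde{*})$ is trivial on the central $\Z$, so the projection $q$ is equivariant and hence $\proj_*$ is an isomorphism of representations. The paper's proof is two sentences invoking exactly this fact (citing Farb--Margalit for the description of the conjugation action), whereas you spell out carefully why both actions descend to $\Mod_{(k)}(\Sigma_1)$ and why the descended action on $H_1(F_k;\Z)$ agrees with the one used later via the splitting $\iota_*$; these checks are implicit in the paper but your treatment makes them explicit.
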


\begin{proof}
The action by conjugation of $\Mod_{(k)}(\Sigma_2)$ on $\pi_1(\UT S_{g-1}^1, \widetilde{*})$ is the restriction of the action of $\Mod(S_{g-1}^2)$ on $\pi_1(\UT S_{g-1}^1, \widetilde{*})$ (see, e.g., Farb and Margalit \cite[Fact 4.8]{FarbMarg}).  Then $\Mod_{(k)}(\Sigma_2)$ acts trivially on the kernel of the map $\pi_1(\UT S_{g-1}^1, \widetilde{*})\rightarrow \pi_1(S_{g-1}^1, *)$, so the lemma follows.  
\end{proof}

\section{The homology of curve stabilizers in the Johnson filtration}\label{homolsection}

The goal of this section is to prove Proposition \ref{birmanhomolprop} below.  This i part s the main part of the inductive step of the proof of Theorem \ref{infgenthm}. 

\begin{prop}\label{birmanhomolprop}
Let $g \geq 3$ and $k \geq 3$.  Suppose that $H_{2g-5}(\Mod_{(k)}(S_{g-1});\Z)$ is infinitely generated.  Let $c \subseteq S_g$ be a nonseparating simple closed curve.  Then $H_{2g-3}(\Stab_c(\Mod_{(k)}(S_g));\Z)$ is an infinitely generated abelian group.
\end{prop}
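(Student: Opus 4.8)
The plan is to exploit the stabilizer $\Stab_c(\Mod_{(k)}(S_g))$ via the cutting-along-$c$ picture together with the Birman exact sequence from Proposition~\ref{oneboundsymplemma}. Cutting $S_g$ along the nonseparating curve $c$ produces $S_{g-1}^2$, and up to passing to the finite-index subgroup that fixes the two sides of $c$ (which does not affect infinite generation of $H_{2g-3}$), the stabilizer $\Stab_c(\Mod_{(k)}(S_g))$ should be identified with $\Mod_{(k)}(\Sigma_2)$ for $\Sigma_2 = (S_{g-1}^2, \pi_0(\partial S_{g-1}^2))$; this uses Church's result that the Johnson filtration of a partitioned surface is the preimage of the ambient Johnson filtration, exactly as in the proof of Lemma~\ref{beswelldef}. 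So I would first reduce the claim to showing $H_{2g-3}(\Mod_{(k)}(\Sigma_2);\Z)$ is infinitely generated. By the same capping argument one further reduces from $\Sigma_2$ to $\Sigma_1 = (S_{g-1}^1, \pi_0(\partial S_{g-1}^1))$ and finally to $\Mod_{(k)}(S_{g-1})$, so the two boundary components are only there to make the Birman sequence available; the real input is the hypothesis that $H_{2g-5}(\Mod_{(k)}(S_{g-1});\Z)$ is infinitely generated (noting $2(g-1)-3 = 2g-5$).

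The main tool is the Leray--Serre spectral sequence of the short exact sequence from Proposition~\ref{oneboundsymplemma},
\begin{displaymath}
1 \rightarrow \Gamma(\kappa|_{F_k}) \rightarrow \Mod_{(k)}(\Sigma_2) \xrightarrow{\rho_k} \Mod_{(k)}(\Sigma_1) \rightarrow 1,
\end{displaymath}
which gives $E^2_{p,q} = H_p(\Mod_{(k)}(\Sigma_1); H_q(\Gamma(\kappa|_{F_k});\Z)) \Rightarrow H_{p+q}(\Mod_{(k)}(\Sigma_2);\Z)$. The key point is to understand the coefficient modules $H_q(\Gamma(\kappa|_{F_k});\Z)$. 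Since $\Gamma(\kappa|_{F_k}) \cong F_k$, and $F_k$ sits inside $\pi_1(S_{g-1}^1,*)$ with $\pi_1(S_{g-1}^1,*)/F_k$ solvable and noncyclic by property (3), the group $F_k$ is itself a nonabelian free group (it is an infinite-index subgroup of a surface-with-boundary group of genus $\geq 2$, hence free; it is nonabelian because the quotient is noncyclic while $F_k$ is nontrivial of infinite index). Thus $H_q(\Gamma(\kappa|_{F_k});\Z) = 0$ for $q \geq 2$, $H_0 = \Z$ (trivial action), and $H_1(\Gamma(\kappa|_{F_k});\Z) \cong H_1(F_k)$ as $\Mod_{(k)}(\Sigma_1)$-modules by Lemma~\ref{conjactionlemma}. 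Since $F_k$ is free, $H_1(F_k) = F_k^{\ab}$, which is infinitely generated as an abelian group; moreover by property (2) the quotient $F_k/F_{k+1}$ carries the trivial $\Mod_{(k)}(\Sigma_1)$-action, and one can try to filter $H_1(F_k)$ by the images of $F_{k+j}/(\text{commutators})$ to see that a quotient of $H_1(F_k)$ contains infinitely many trivial-module summands.

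With the spectral sequence having only two nonzero rows $q=0,1$, there is a five-term-type exact sequence, and in particular $E^\infty_{p,0}$ and $E^\infty_{p-1,1}$ assemble $H_p(\Mod_{(k)}(\Sigma_2);\Z)$. Specializing to $p + q = 2g-3$: the bottom-row contribution $E^2_{2g-3,0} = H_{2g-3}(\Mod_{(k)}(\Sigma_1);\Z)$ is already infinitely generated once we know the result for $\Sigma_1$, which by the capping reduction follows from $H_{2g-3}(\Mod_{(k)}(S_{g-1});\Z)$ — but wait, the hypothesis only gives us degree $2g-5$, so the genuinely useful contribution is the $q=1$ row: $E^2_{2g-4,1} = H_{2g-4}(\Mod_{(k)}(\Sigma_1); H_1(F_k))$, and I would instead run the reduction so that the relevant input is $H_{2g-5}$ of the lower-genus group feeding a $q=2$ or a twisted $q=1$ term. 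The cleanest route: show $H_{2g-4}(\Mod_{(k)}(\Sigma_1); \mathbf{M})$ is infinitely generated where $\mathbf{M} = H_1(F_k)$, by using the trivial-submodule structure of $\mathbf{M}$ together with $H_{2g-4}(\Mod_{(k)}(\Sigma_1);\Z)$ — and here I would invoke the already-known (capped-off) identification $\cd(\Mod_{(k)}(\Sigma_1)) = 2g-4$ and a comparison $H_{2g-5}(\Mod_{(k)}(S_{g-1});\Z) \hookrightarrow H_{2g-4}(\Mod_{(k)}(\Sigma_1);\Z)$ coming from the Birman sequence for capping the last boundary component. Finally, I must argue no differentials kill this class: the only differential out of or into $E^2_{2g-4,1}$ lands in $E^2_{2g-6,2}=0$ and comes from $E^2_{2g-2,0}$, so after possibly quotienting by a finitely generated piece the infinite generation survives to $E^\infty$ and hence injects into $H_{2g-3}(\Mod_{(k)}(\Sigma_2);\Z) \cong H_{2g-3}(\Stab_c(\Mod_{(k)}(S_g));\Z)$.

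The hard part will be bookkeeping the coefficient module $\mathbf{M} = H_1(F_k)$ precisely enough to extract an infinitely generated trivial-module quotient and to propagate that through $H_{2g-4}(\Mod_{(k)}(\Sigma_1);-)$ — i.e., establishing the twisted-coefficient comparison map from $H_{2g-5}(\Mod_{(k)}(S_{g-1});\Z)$ and checking it is injective — together with verifying that the relevant spectral-sequence differential has finitely generated image so that infinite generation is not destroyed. Identifying $\Stab_c$ with $\Mod_{(k)}(\Sigma_2)$ up to finite index, and checking that passing to finite index preserves infinite generation of $H_{2g-3}$ (e.g. via a transfer argument, which works after tensoring with $\Q$, or more carefully using that the relevant classes come from a common free-abelian source), is a secondary technical point I would handle at the start.
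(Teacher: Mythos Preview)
Your skeleton is right and matches the paper: identify $\Stab_c(\Mod_{(k)}(S_g))$ with $\Mod_{(k)}(\Sigma_2)$, run the Lyndon--Hochschild--Serre spectral sequence of Proposition~\ref{oneboundsymplemma} (two nonzero rows since $F_k$ is free), and feed it $H_{2g-4}(\Mod_{(k)}(\Sigma_1);\Z)$ produced from the hypothesis via the Akin--Birman sequence. But there is a genuine gap at the heart of the coefficient computation, and it occurs twice.

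You need $H_{2g-4}(\Mod_{(k)}(\Sigma_1);H_1(F_k))$ to be infinitely generated. Your plan is to find an ``infinitely generated trivial-module quotient'' of $H_1(F_k)$ by filtering via the images of $F_{k+j}$. This does not work: property~(2) of Proposition~\ref{oneboundsymplemma} only tells you $\Mod_{(k)}(\Sigma_1)$ acts trivially on $F_k/F_{k+1}$, not on the deeper graded pieces $F_{k+j}/F_{k+j+1}$, and in any case $F_k/F_{k+1}$ is \emph{finitely generated}. The paper instead proves a general lemma (Lemma~\ref{lerayserrelemma}): if the free kernel $F$ carries a nondegenerate $Q$-invariant symplectic form on $H_1(F;\Z)$ restricting nondegenerately to $V=\im(F_{k+1}\to H_1(F_k))$, then $H_1(F_k)\cong V\oplus V^\perp$ as $Q$-modules, with $V^\perp\cong F_k/F_{k+1}$ trivial; hence $H_{\cd(Q)}(Q;H_1(F_k))$ contains $H_{\cd(Q)}(Q;\Z)\otimes(F_k/F_{k+1})$, which is infinitely generated. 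The symplectic form is the algebraic intersection form on the cover $S_{F_k}\to S_{g-1}^1$, and its nondegeneracy (both globally and on $V$) is established via one-endedness of the cover and an orbit argument (Lemmas~\ref{nondegencoverlemma} and~\ref{inforbit}). This is the missing idea, and you need it again for the Akin step (your claimed injection $H_{2g-5}(\Mod_{(k)}(S_{g-1});\Z)\hookrightarrow H_{2g-4}(\Mod_{(k)}(\Sigma_1);\Z)$ is exactly Lemma~\ref{akinlemma}, proved the same way).

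Two smaller corrections. First, once you know $\cd(\Mod_{(k)}(\Sigma_1))=2g-4$, the differential you worry about is zero: $E^2_{2g-2,0}=H_{2g-2}(\Mod_{(k)}(\Sigma_1);\Z)=0$, so no ``quotienting by a finitely generated piece'' is needed and $E^2_{2g-4,1}=E^\infty_{2g-4,1}$. Second, no finite-index passage is necessary: for $k\geq 3$ the identification $\Stab_c(\Mod_{(k)}(S_g))\cong\Mod_{(k)}(\Sigma_2)$ is exact (elements of the Torelli group already fix the sides of $c$, and the bounding-pair map along $\partial S_{g-1}^2$ is not in $\Mod_{(k)}$ for $k\geq 3$); this is Lemma~\ref{cleanuplemma}.
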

\bn The proof of Proposition \ref{birmanhomolprop} requires three ingredients:
\begin{enumerate}[ label=(\arabic{enumi})]
\item Lemma \ref{lerayserrelemma}, a general fact about the homology of a group extension,
\item Lemma \ref{nondegencoverlemma} and Lemma \ref{inforbit}, which are general facts about the homology of infinite covers of surfaces, and
\item Lemma \ref{akinlemma}, which says that $H_{2g-5}(\Mod_{(k)}(S_{g-1});\Z)$ infinitely generated implies \newline $H_{2g-4}(\Mod_{(k)}(S_{g-1,1});\Z)$ infinitely generated.
\end{enumerate}
\bn Lemma \ref{akinlemma} follows by applying Lemma \ref{lerayserrelemma}, Lemma \ref{nondegencoverlemma} and Lemma \ref{inforbit} to a variant of the Birman exact sequence given by Akin \cite{Akin}.  Proposition \ref{birmanhomolprop} will follow by applying the same argument structure to the short exact sequence constructed in Section \ref{birmansection}.

\p{Homology of group extensions} If $\langle \cdot, \cdot \rangle$ is a symplectic form on a free abelian group $V$, we say that $v \in V$ is degenerate if $\langle v, \cdot \rangle: V \rightarrow \Z$ is the zero map.  We say that a form $\langle \cdot, \cdot \rangle$ on $V$ is degenerate if some element of $V$ is degenerate, and otherwise that $\langle \cdot, \cdot \rangle$ is nondegenerate.  We will say that $V' \subseteq V$ is a \textit{degenerate subspace} if every $v \in V'$ is degenerate, and denote by $V^{\degen}$ the subspace spanned by all degenerate elements of $V$.  The following lemma is a generalization of a result of Bestvina--Bux--Margalit~\cite[Lemma 7.7]{BBM}, and is ingredient one in the proof of Proposition \ref{oneboundsymplemma}.

\begin{lemma}\label{lerayserrelemma}
Let $F$ be a non--cyclic free group and let
\begin{displaymath}
1 \rightarrow F \rightarrow G \rightarrow Q \rightarrow 1
\end{displaymath}
\noindent be a short exact sequence of groups.  Equip $H_1(F;\Z)$ with the $Q$--action induced by the conjugation action of $G$ on $F$.  Let $V \subseteq H_1(F;\Z)$ be a $Q$--invariant subgroup such that $H_1(F;\Z)/V$ is torsion free.  Suppose that:
\begin{enumerate}[label=(\arabic{enumi})]
\item $H_1(F;\Z)$ admits a nondegenerate $Q$--invariant symplectic form $\langle \cdot, \cdot \rangle$,
\item the restriction of $\langle \cdot, \cdot \rangle$ to $V$ is nondegenerate,
\item the quotient $H_1(F;\Z)/V$ is a trivial $Q$--module, and
\item $H_{\cd(Q)}(Q;\Z)$ is infinitely generated.
\end{enumerate}
\noindent Then $H_{\cd(Q) + 1}(G;\Z)$ is infinitely generated.  
\end{lemma}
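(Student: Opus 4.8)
The plan is to use the Lyndon--Hochschild--Serre spectral sequence
\[
E^2_{p,q} = H_p(Q; H_q(F;\Z)) \Longrightarrow H_{p+q}(G;\Z)
\]
associated to the extension $1 \to F \to G \to Q \to 1$. Since $F$ is free, $H_q(F;\Z) = 0$ for $q \geq 2$, so the spectral sequence has only two nonzero rows, $q = 0$ and $q = 1$, and therefore degenerates: the only possibly nonzero differential is $d^2 : E^2_{p,0} \to E^2_{p-2,1}$, and there is a long exact (Wang-type / Gysin-type) sequence
\[
\cdots \to H_{n}(Q; H_1(F;\Z)) \to H_n(G;\Z) \to H_n(Q;\Z) \xrightarrow{d^2} H_{n-1}(Q; H_1(F;\Z)) \to \cdots .
\]
Set $n = \cd(Q) + 1$. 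Since $\cd(Q) + 1 > \cd(Q)$, we have $H_{\cd(Q)+1}(Q;\Z) = 0$, so the long exact sequence gives a surjection
\[
H_{\cd(Q)}(Q; H_1(F;\Z)) \twoheadrightarrow H_{\cd(Q)+1}(G;\Z).
\]
That is the wrong direction for our purposes, so instead I would extract from the same long exact sequence the right-hand piece near $n = \cd(Q)+1$: the $E^\infty_{p,1}$ term with $p = \cd(Q)$ is a subquotient of $H_{\cd(Q)+1}(G;\Z)$, and because $E^2_{p,1} = 0$ for $p > \cd(Q)$ there is no incoming differential, so $E^\infty_{\cd(Q),1}$ is the cokernel of $d^2: H_{\cd(Q)+2}(Q;\Z) \to H_{\cd(Q)}(Q;H_1(F;\Z))$, hence a quotient of $H_{\cd(Q)}(Q; H_1(F;\Z))$; and since $E^2_{\cd(Q)+1,0} = H_{\cd(Q)+1}(Q;\Z) = 0$, this $E^\infty_{\cd(Q),1}$ is all of $H_{\cd(Q)+1}(G;\Z)$. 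So it suffices to show $H_{\cd(Q)}(Q; H_1(F;\Z))$ surjects onto an infinitely generated group, and the cleanest route is to find an infinitely generated quotient of $H_1(F;\Z)$ as a $Q$-module that splits off $H_{\cd(Q)}$ suitably.

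The role of hypotheses (1)--(3) is exactly to produce such a splitting. The short exact sequence of $Q$-modules $0 \to V \to H_1(F;\Z) \to H_1(F;\Z)/V \to 0$ has torsion-free cokernel (by assumption) on which $Q$ acts trivially (hypothesis (3)); I claim it splits $Q$-equivariantly. Indeed, a $Q$-equivariant splitting of $H_1(F;\Z) \to H_1(F;\Z)/V$ amounts to a $Q$-invariant complement to $V$; since $\langle \cdot,\cdot\rangle$ is nondegenerate on all of $H_1(F;\Z)$ (hypothesis (1)) and nondegenerate on $V$ (hypothesis (2)), the symplectic orthogonal complement $V^\perp$ is a $Q$-invariant complement with $H_1(F;\Z) = V \oplus V^\perp$ and $V^\perp \cong H_1(F;\Z)/V$ as $Q$-modules. (One checks $V \cap V^\perp = 0$ from nondegeneracy on $V$, and a rank/nondegeneracy count over $\Q$ together with torsion-freeness of the quotient upgrades this to a splitting over $\Z$.) Moreover $V^\perp$ is nonzero: since $F$ is non-cyclic, $H_1(F;\Z)$ has rank $\geq 2$, and if $V = H_1(F;\Z)$ then $H_1(F;\Z)/V = 0$ would be fine but then $V^\perp = 0$; the point is that a nondegenerate symplectic form forces even rank, and one shows $V^\perp \neq 0$ — actually I should be careful here, so the key observation I would nail down is that the hypotheses force $V \subsetneq H_1(F;\Z)$, equivalently $V^\perp \neq 0$. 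Granting that, $V^\perp$ is a nonzero free abelian trivial $Q$-module, a direct summand of $H_1(F;\Z)$, so
\[
H_{\cd(Q)}(Q; H_1(F;\Z)) \cong H_{\cd(Q)}(Q; V) \oplus H_{\cd(Q)}(Q; V^\perp)
\]
and $H_{\cd(Q)}(Q; V^\perp) \cong H_{\cd(Q)}(Q;\Z) \otimes_\Z V^\perp$, which is infinitely generated by hypothesis (4) since $V^\perp \neq 0$ is free abelian. Combining with the surjection from the spectral sequence, $H_{\cd(Q)+1}(G;\Z)$ is infinitely generated.

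The main obstacle I anticipate is the $\Z$-coefficient bookkeeping in the splitting step: over $\Q$ the orthogonal complement argument is immediate, but producing an honest $Q$-equivariant splitting over $\Z$ (so that $V^\perp$ is genuinely free abelian and a direct summand, not just finite-index in one) needs the torsion-freeness of $H_1(F;\Z)/V$ and a short argument that $V \oplus V^\perp = H_1(F;\Z)$ on the nose; I would handle this by noting $V \cap V^\perp = 0$ (nondegeneracy on $V$), that $V + V^\perp$ has finite index (rank count), and that $H_1(F;\Z)/(V+V^\perp)$ is a quotient of the torsion-free group $H_1(F;\Z)/V$ and hence trivial. A secondary subtlety is confirming $V \neq H_1(F;\Z)$, i.e.\ that the hypotheses genuinely leave a nonzero trivial summand; this is presumably where non-cyclicity of $F$ combines with the parity constraint from the symplectic form, and I would isolate it as a small lemma or remark before the main computation. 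Everything else — the two-row degeneration of the LHS spectral sequence, the five-term/Wang exact sequence, and the vanishing $H_{\cd(Q)+1}(Q;\Z) = 0$ — is standard and follows the template of Bestvina--Bux--Margalit \cite[Lemma 7.7]{BBM}.
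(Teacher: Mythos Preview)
Your approach is the same as the paper's --- the Lyndon--Hochschild--Serre spectral sequence plus the symplectic orthogonal complement $V^\perp$ to split off a trivial $Q$--summand --- but there is one genuine logical slip. You correctly identify $H_{\cd(Q)+1}(G;\Z) \cong E^\infty_{\cd(Q),1}$ as the cokernel of $d^2\colon H_{\cd(Q)+2}(Q;\Z) \to H_{\cd(Q)}(Q; H_1(F;\Z))$, but then treat this only as a \emph{surjection} from $H_{\cd(Q)}(Q; H_1(F;\Z))$ and conclude that the target is infinitely generated because the source is. That inference fails in general: a quotient of an infinitely generated abelian group can be finitely generated, and nothing you have said controls the image of $d^2$. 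The fix is the one observation you wrote down but did not use: the domain $H_{\cd(Q)+2}(Q;\Z)$ vanishes since $\cd(Q)+2 > \cd(Q)$, so $d^2 = 0$ and your surjection is in fact an isomorphism. (The paper phrases this as $E^2_{p,q}=0$ for all $p+q > \cd(Q)+1$, whence no nontrivial differentials touch $E^2_{\cd(Q),1}$.) Also, your sentence ``because $E^2_{p,1} = 0$ for $p > \cd(Q)$ there is no incoming differential'' is looking at the wrong bidegree: the incoming $d^2$ lands from $E^2_{\cd(Q)+2,0}$, not from the $q=1$ row.

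Your worry about $V^\perp \neq 0$ is legitimate and is not resolved by non--cyclicity of $F$ or by parity: nothing in the stated hypotheses rules out $V = H_1(F;\Z)$, and the paper's proof tacitly assumes the quotient is nontrivial at the last step as well. In the paper's two applications of the lemma the nontriviality of $H_1(F;\Z)/V$ is established separately beforehand (it is the content of property~(1) in Proposition~\ref{oneboundsymplemma} and of the analogous statement for the lower central series), so the issue is harmless in context; but as a standalone lemma one should either add $V \neq H_1(F;\Z)$ as a hypothesis or note that the conclusion is vacuous otherwise. Your $\Z$--coefficient bookkeeping for $V \oplus V^\perp = H_1(F;\Z)$ is more careful than the paper's and is fine.
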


\begin{proof}
We begin with the following claim.

\p{Claim} There is an isomorphism $H_{\cd(Q) + 1}(G;\Z) \cong H_{\cd(Q)}(Q;H_1(F;\Z))$. 

\p{Proof of claim}  The cohomological dimension of $F$ is 1 since $F$ is free.  Consider the Lyndon--Hochschild--Serre spectral sequence $\bE_{\cd(Q),1}^2$  \cite[Section VII.6]{Brownbook} corresponding to the short exact sequence
\begin{displaymath}
1 \rightarrow F \rightarrow G \rightarrow Q \rightarrow 1.
\end{displaymath}  
\bn This is the spectral sequence with $\bE_{p,q}^2 = H_p(Q;H_q(F;\Z))$ that converges to $H_{p+q}(G;\Z)$.  We have $\bE_{p,q}^r = 0$ for all $p +q > \cd(Q) + 1$ and $r \geq 2$, since either $p > \cd(Q)$ or $q > \cd(F) = 1$.   Thus there are no non--trivial differentials into or out of $\bE_{\cd(Q), 1}^2$ and therefore $\bE_{\cd(Q),1}^2 \cong \bE_{\cd(Q),1}^\infty$.  By the definition of the Lyndon--Hochschild--Serre spectral sequence, $\bE_{\cd(Q),1}^2 = H_{\cd(Q)}(Q;H_1(F;\Z))$, so the claim holds.
\smallskip

By the claim it suffices to show that $H_{\cd(Q)}(Q;H_1(F;\Z))$ is infinitely generated.  By hypothesis~(2), the subspace $V^{\perp} \subseteq H_1(F;\Z)$ is isomorphic to $H_1(F;\Z)/V$ as a $Q$--module.  But the form $\langle \cdot, \cdot \rangle$ is $Q$--invariant by hypothesis (1), so $V \oplus V^{\perp}$ is a decomposition of $H_1(F;\Z)$ into $Q$--modules.  Therefore the short exact sequence
\begin{displaymath}
0 \rightarrow V \rightarrow H_1(F;\Z) \rightarrow H_1(F;\Z)/V \rightarrow 0
\end{displaymath}
\noindent splits as a sequence of $Q$--modules, and thus $H_{\cd(Q)}(Q;H_1(F;\Z))$ splits as a direct sum
\begin{displaymath}
H_{\cd(Q)}(Q;V) \oplus H_{\cd(Q)}(Q;H_1(F;\Z)/V).
\end{displaymath}
\noindent  By hypothesis (3), the action of $Q$ on $H_1(F;\Z)/V$ is trivial.  Therefore there is an isomorphism
\begin{displaymath}
H_{\cd(Q)}(Q; H_1(F;\Z)/V) \cong H_{\cd(Q)}(Q;\Z) \otimes H_1(F;\Z)/V.
\end{displaymath}
\noindent  Since $H_{\cd(Q)}(Q;\Z)$ is infinitely generated by hypothesis, $H_{\cd(Q) + 1}(G;\Z)$ contains an infinitely generated free abelian subgroup and thus is infinitely generated as well.
\end{proof}

\p{Homology of covers} We now discuss the second ingredient of the proof of Proposition \ref{birmanhomolprop}.

\begin{lemma}\label{nondegencoverlemma}
Let $S = S_{g}^b$ with $b \in \{0,1\}$.  Let $\Gamma \trianglelefteq \pi_1(S)$ be an infinite index normal subgroup contained in $\pi_1^{(1)}(S)$.  Suppose that $\pi_1(S)/\Gamma$ is solvable and non--cyclic.  Let $S_\Gamma$ be the cover of $S$ corresponding to $\Gamma$.  Suppose that, if $\delta$ is a loop surrounding a boundary component of $S$, then $\delta \not \in \Gamma$.  Then the algebraic intersection form on $H_1(S_{\Gamma};\Z)$ is non--degenerate.
\end{lemma}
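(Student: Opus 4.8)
The plan is to deduce non-degeneracy from the single geometric fact that the cover $S_\Gamma$ has exactly one end, by localizing to compact subsurfaces. The starting point is the following standard consequence of Poincar\'e--Lefschetz duality for a compact oriented surface $K$ with boundary: the radical of the intersection form on $H_1(K;\Z)$ equals $\im\!\big(H_1(\partial K;\Z)\to H_1(K;\Z)\big)$ (the intersection form is the unimodular pairing $H_1(K;\Z)\times H_1(K,\partial K;\Z)\to\Z$ precomposed with $H_1(K)\to H_1(K,\partial K)$, whose kernel is $\im H_1(\partial K)$ by the long exact sequence of the pair). In particular, \emph{if $\partial K$ is a single circle, then the intersection form on $H_1(K;\Z)$ is non-degenerate}, since that circle is null-homologous in $K$. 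The other elementary observation is that for classes carried by a compact subsurface $K\subseteq S_\Gamma$, intersection numbers computed in $K$ agree with those computed in $S_\Gamma$; hence a class $x\in H_1(S_\Gamma;\Z)$ that is degenerate in $S_\Gamma$ restricts to a degenerate class of $H_1(K;\Z)$ for every compact subsurface $K$ through which $x$ factors.

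\textbf{One end.} Let $Q=\pi_1(S)/\Gamma$; it is finitely generated (a quotient of $\pi_1(S)$) and solvable by hypothesis. Since $\Gamma\subseteq\pi_1^{(1)}(S)$, the group $Q$ surjects onto $H_1(S;\Z)\cong\Z^{2g}$, which contains $\Z^2$ once $g\geq 1$ (the cases $g=0$ are vacuous, since then $\pi_1(S)/\Gamma$ is cyclic, contrary to hypothesis); hence $Q$ is not virtually cyclic. By Stallings' ends theorem a finitely generated group with infinitely many ends contains a non-abelian free subgroup, which a solvable group cannot; so the infinite group $Q$ has one or two ends, and not being virtually cyclic it has exactly one end. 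As $S$ is a compact surface, $Q$ acts freely, properly discontinuously and cocompactly by isometries on $S_\Gamma$ (with a lifted Riemannian metric), so by the \v{S}varc--Milnor lemma $S_\Gamma$ is quasi-isometric to $Q$ and therefore has exactly one end. Finally, the hypothesis on boundary loops $\delta$ means that $\partial S_\Gamma$, the preimage of $\partial S$, is a disjoint union of lines; in particular $H_1(\partial S_\Gamma;\Z)=0$, and $S_\Gamma$ has no boundary circles.

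\textbf{The exhaustion and conclusion.} Let $x\in H_1(S_\Gamma;\Z)$ be degenerate; I claim $x=0$. Represent $x$ by an oriented multicurve $\mu$ lying in a compact subsurface $K_0\subseteq \Int S_\Gamma$. Using one-endedness, enlarge $K_0$ to a compact connected subsurface $K_1\subseteq \Int S_\Gamma$ containing $\mu$ with $S_\Gamma\setminus\Int K_1$ connected: a compact subsurface has only finitely many complementary components, only one of which is non-compact once $K_1$ is large, and each compact complementary component is automatically contained in $\Int S_\Gamma$ (it cannot meet a boundary line, which is non-compact), so these may be absorbed. The frontier $\partial K_1$ is a finite union of circles, all lying on the boundary of the single component $S_\Gamma\setminus\Int K_1$; join them by a system of disjoint arcs in that component, chosen to miss $\partial S_\Gamma$, and let $K''$ be $K_1$ together with thin tubular neighborhoods of these arcs. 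Then $K''\subseteq\Int S_\Gamma$ is a compact connected subsurface containing $\mu$ whose boundary is a single circle. Since $x$ is degenerate in $S_\Gamma$, its image in $H_1(K'';\Z)$ is degenerate there, hence lies in $\im\!\big(H_1(\partial K'';\Z)\to H_1(K'';\Z)\big)=0$ by the first paragraph; so $x=0$ in $H_1(K'';\Z)$, and a fortiori $x=0$ in $H_1(S_\Gamma;\Z)$. Thus the intersection form on $H_1(S_\Gamma;\Z)$ is non-degenerate.

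\textbf{Main obstacle.} The delicate point is the construction of $K''$: arranging an exhaustion by compact subsurfaces with connected boundary that stay off $\partial S_\Gamma$. This is exactly where one-endedness enters (a two-ended cover, such as a $\Z$-cover, admits no such exhaustion and indeed has a degenerate form), and where it is essential that $\partial S_\Gamma$ consists of lines rather than circles (a boundary circle would force a compact annular component in every complement and would yield a genuinely degenerate boundary-parallel class, so the hypothesis on $\delta$ is used here). I expect the remaining point-set bookkeeping in this step to be routine but somewhat fiddly.
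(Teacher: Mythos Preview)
Your proof is correct and shares the paper's core idea: both arguments establish that $S_\Gamma$ has exactly one end (using solvability of $\pi_1(S)/\Gamma$ together with a \v{S}varc--Milnor/quasi-isometry argument) and that $\partial S_\Gamma$ contains no circles, and then deduce non-degeneracy from these two geometric facts. The endgames are packaged differently. The paper argues directly with curves: one-endedness plus the absence of compact boundary forces every separating simple closed curve on $S_\Gamma$ to bound a compact subsurface, hence to be null-homologous; so any nonzero class is a multiple of the class of a nonseparating curve, which has a geometric dual. You instead localize via Poincar\'e--Lefschetz duality: you produce a compact subsurface $K''\subseteq\Int S_\Gamma$ with a single boundary circle carrying the given class (this is exactly where one-endedness and the absence of boundary circles are used), and then invoke that the radical of the intersection form on $H_1(K'')$ equals $\im\!\big(H_1(\partial K'')\to H_1(K'')\big)=0$. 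Your route avoids appealing to a simple-closed-curve representative of a primitive class on an infinite-type surface; the paper's route is slightly shorter once one grants that representative. Your justification that $Q$ is not virtually cyclic (via the surjection $Q\twoheadrightarrow H_1(S;\Z)\cong\Z^{2g}$ coming from $\Gamma\subseteq\pi_1^{(1)}(S)$) is also a touch more careful than the paper's appeal to ``solvable, non-cyclic $\Rightarrow$ one-ended'' as a black box.
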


\begin{proof}
We use the fact that solvable, non--cyclic groups are one--ended (see, e.g., L{\"o}h \cite[Theorem 8.2.14]{Loh}).  Choose a basepoint $*_{\Gamma} \in S_{\Gamma}$ and a finite set of generators $\calF \subseteq \pi_1(S)$.  Let $G \subseteq S_{\Gamma}$ be an embedding of the Cayley graph of $\pi_1(S)/\Gamma$ containing $*_{\Gamma}$ such that each edge of $G$ is a lift of a loop $\gamma \in \calF$.  Since $S$ is compact, basic covering space theory tells us that $S_{\Gamma}$ is quasi--isometric to $G$.  Therefore $S_{\Gamma}$ has one end, so if $\delta \subseteq \pi_1(S)$ is a separating simple closed curve, at least one connected component of $S \cut \delta$ has compact closure in $S$.  But since no loop surrounding a boundary component is contained in $\Gamma$,  the surface $S_{\Gamma}$ has no compact boundary components.  Therefore $\delta$ is the boundary of a compact subsurface of $S_{\Gamma}$ given by the closure in $S_{\Gamma}$ of the finite--type connected component of $S_{\Gamma} \setminus \delta$.  In particular, this implies that any class in $H_1(S_{\Gamma};\Z)$ represented by a separating curve is the trivial class, so any nonzero class $v \in H_1(S_{\Gamma};\Z)$ is a multiple of a class represented by a nonseparating curve $a$.  Since $a$ is nonseparating, there is a curve $b \subseteq S_{\Gamma}$ with $\left|b \cap a\right| = 1$.  Therefore the class $v$ has nonzero algebraic intersection with the class $[b]$, so the algebraic intersection form on $H_1(S_{\Gamma};\Z)$ is nondegenerate, as desired.
\end{proof}

\bn We need one more fact about the homology of covers.

\begin{lemma}\label{inforbit}
Let $S = S_{g}^b$ with $b \in \{0,1\}$.  Let $\Gamma \trianglelefteq \pi_1(S)$ be an infinite index normal subgroup such that $\pi_1(S)/\Gamma$ is solvable and non--cyclic.  The subgroup of $H_1(\Gamma;\Z)$ spanned by the $\pi_1(S)$--orbit of any nonzero $v \in H_1(\Gamma;\Z)$ is infinitely generated. 
\end{lemma}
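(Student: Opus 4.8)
The plan is to exploit the fact that $\Gamma$ is a \emph{normal} subgroup of $\pi_1(S)$ with solvable, non-cyclic quotient $\pi_1(S)/\Gamma$, so that $H_1(\Gamma;\Z)$ is a module over the group ring $\Z[\pi_1(S)/\Gamma]$, and to show that no nonzero element of this module is contained in a finitely generated submodule. First I would reduce to the case where $\Gamma \subseteq \pi_1^{(1)}(S) = [\pi_1(S),\pi_1(S)]$: indeed, if $\Gamma \not\subseteq \pi_1^{(1)}(S)$ then $\pi_1(S)/\Gamma$ is not perfect (it never is, being solvable), but more usefully, we may pass to the intermediate cover corresponding to $\Gamma' = \Gamma \cdot \pi_1^{(1)}(S)$... actually the cleaner move is simply to observe that $\pi_1(S)/\Gamma$ surjects onto an infinite solvable non-cyclic group, hence onto an infinite group, and that the $\pi_1(S)$-action on $H_1(\Gamma;\Z)$ factors through $\pi_1(S)/\Gamma$.

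The core of the argument should go as follows. Since $\Gamma$ is normal with $\pi_1(S)/\Gamma$ infinite, the cover $S_\Gamma$ is an infinite-sheeted cover of the compact surface $S$, hence $S_\Gamma$ is a non-compact surface, so $\Gamma$ is a free group and $H_1(\Gamma;\Z) = H_1(S_\Gamma;\Z)$ is a free abelian group, which carries the structure of a $\Z[Q]$-module where $Q = \pi_1(S)/\Gamma$. Now suppose for contradiction that some nonzero $v \in H_1(\Gamma;\Z)$ spans a finitely generated subgroup $W$ under the $\pi_1(S)$-orbit; then $W$ is a finitely generated $\Z[Q]$-submodule of $H_1(\Gamma;\Z)$, and being a subgroup of a free abelian group it is itself finitely generated free abelian, say of rank $n$. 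The action of $Q$ on $W$ then gives a homomorphism $Q \to \GL_n(\Z)$. I would then argue that the image of this homomorphism must be finite: an infinite solvable subgroup of $\GL_n(\Z)$ would have to contain either a free abelian subgroup of positive rank or a unipotent part, and in either case one can produce an element $v' = \gamma \cdot v$ lying outside $W$ by iterating a suitable element of $Q$ — concretely, pick a representative loop $a$ for $v$ in $S_\Gamma$ and a deck transformation $\phi$ of infinite order moving $a$ off itself; the translates $\phi^j \cdot a$ for $j \in \Z$ cannot all be supported on a compact subsurface, but a finitely generated $W$ would be supported on a compact subsurface of $S_\Gamma$, contradiction.

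The cleanest way to make the last step rigorous, and probably the one the author intends, is geometric rather than representation-theoretic: a finitely generated subgroup $W \subseteq H_1(S_\Gamma;\Z)$ is carried by a compact subsurface $K \subseteq S_\Gamma$. Pick any nonzero $v$ and represent it by a $1$-cycle supported in $K$. Because $Q$ is infinite and acts on $S_\Gamma$ by deck transformations with no compact orbit on the surface (as $S_\Gamma/Q = S$ is compact but $S_\Gamma$ is not), we can choose deck transformations $\phi_1, \phi_2, \dots$ with $\phi_j(K)$ eventually escaping any compact set; then for $j$ large, $\phi_j(K) \cap K = \emptyset$, so $\phi_j \cdot v$ is represented by a cycle disjoint from $K$. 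If $\phi_j \cdot v$ were a nonzero element of $W$, it would be homologous to a cycle in $K$, which would force a nonseparating curve of $S_\Gamma$ lying in $\phi_j(K)$ to have algebraic intersection zero with everything carried by $K$; iterating this and using that $H_1(S_\Gamma;\Z)$ is spanned by classes of nonseparating curves (an input available from the proof structure around Lemma \ref{nondegencoverlemma}, via one-endedness of $Q$), one contradicts the assumption that $W$ contains the whole orbit of $v$.

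The main obstacle, and the point requiring the most care, is ruling out the possibility that $\phi_j \cdot v = 0$ in $H_1$ for all large $j$ while $v \neq 0$ — i.e., that the orbit of $v$ is finite (even though the group $Q$ is infinite). This is where non-cyclicity and one-endedness of $Q$ are essential: one-endedness (established in the proof of Lemma \ref{nondegencoverlemma} via L\"oh's theorem) guarantees that $S_\Gamma$ has one end and that separating curves bound compact subsurfaces, so a class represented by a loop $a$ moved off itself by a deck transformation $\phi$ cannot satisfy $[\phi(a)] = 0$ unless $\phi(a)$ bounds, which one excludes by choosing $a$ nonseparating with a dual curve $b$, $|a \cap b| = 1$, and tracking intersection numbers $\langle \phi^j[a], [b] \rangle$ under the $Q$-action — a fixed nonzero orbit would force this bilinear pairing to stabilize in a way incompatible with $\phi$ having infinite order on the relevant piece of homology. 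Once the orbit of $v$ is shown to be infinite and to escape every compact subsurface, finite generation of the span is immediately contradicted.
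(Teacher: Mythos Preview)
Your geometric approach---pass to the cover $S_\Gamma$, exploit proper discontinuity of the deck action, and invoke a dual curve---is the paper's approach, but the assembly has two errors. First, the obstacle you flag as ``main'' is no obstacle at all: deck transformations are homeomorphisms of $S_\Gamma$, hence act on $H_1(S_\Gamma;\Z)$ by automorphisms, so $\phi_j \cdot v = 0$ is impossible when $v \neq 0$. Your entire final paragraph is devoted to a nonissue. Second, and more seriously, the compact-support step does not close. You argue: $W$ is carried by a compact $K$; for large $j$ we have $\phi_j(K) \cap K = \emptyset$; $\phi_j \cdot v$ is represented in $\phi_j(K)$; hence $\phi_j \cdot v \notin W$. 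But a single homology class can perfectly well be carried by two disjoint subsurfaces, so that last inference fails. You mention the dual curve $b$ but then aim it at the phantom obstacle rather than at this actual gap.

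The paper's fix is to carry the dual curve along from the outset. Replace $v$ by a primitive class, represent it by a simple closed curve $\delta$, and use Lemma~\ref{nondegencoverlemma} to find $\varepsilon$ with $|\delta \cap \varepsilon| = 1$. Now place \emph{both} $\delta$ and $\varepsilon$ inside a single compact genus-one subsurface $T \subseteq S_\Gamma$. Proper discontinuity of the deck action produces infinitely many pairwise disjoint translates $\phi_j(T)$, and then the intersection pairing gives $\langle \phi_j[\delta], \phi_i[\varepsilon] \rangle = 0$ for $i \neq j$ (disjoint supports) and $= 1$ for $i = j$. This directly exhibits infinitely many linearly independent elements in the orbit of $v$. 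The missing idea in your write-up is precisely this: the dual curve must be translated together with $\delta$, inside a common compact piece, so that the intersection form witnesses independence rather than merely nonvanishing.
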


\begin{proof}
Let $S_{\Gamma} \rightarrow S$ be the regular cover corresponding to the subgroup $\Gamma \subseteq \pi_1(S)$.  Since $\Gamma$ has infinite index in $\pi_1(S_g)$, the cover $S_{\Gamma} \rightarrow S$ is infinite-sheeted.  By way of contradiction, suppose that there is a nonzero $v \in H_1(S_{\Gamma};\Z)$ such that the span of the orbit $\pi_1(S) v$ is finitely generated.  Since $\pi_1(S)v$ is finitely generated if and only if the span of $\pi_1(S)( m v)$ is finitely generated for any $m \in \Z$, we may assume that $v$ is primitive.

Let $\delta \subseteq S_\Gamma$ be a simple closed curve representing $v$.  Such a $\delta$ exists since $v$ is primitive.  By Lemma~\ref{nondegencoverlemma}, $H_1(S_{\Gamma};\Z)$ is nondegenerate, so there is another simple closed curve $\varepsilon \subseteq S_\Gamma$ such that $\left|\delta \cap \varepsilon\right| = 1$.  Let $T \subseteq S_\Gamma$ be an embedded copy of $S_1^1$ that contains $\delta$ and $\varepsilon$.  Since $T$ is compact and the action of $\pi_1(S_g)/\Gamma$ on $S_{\Gamma}$ is properly discontinuous, there are infinitely many disjoint $\pi_1(S_g)/\Gamma$--translates of $T$.  Hence there are infinitely many elements in the orbit $(\pi_1(S)/\Gamma)  \delta$ that are all supported on disjoint embedded copies $S_{1}^1 \subseteq S_\Gamma$.  Therefore there are infinitely many classes in the orbit $\pi_1(S)/\Gamma  [v]$ that are pairwise orthogonal in $H_1(S_\Gamma;\Z)$, so the lemma holds.  
\end{proof}

\bn We are almost ready to prove Proposition \ref{birmanhomolprop}.  Let $S_{g,n}$ denote an orientable surface of genus $g$ with $n$ punctures.  If $n = 1$ and $* \in S_{g,1}$ is a basepoint, we define $\Mod_{(k)}(S_{g-1,1})$ to be the subgroup of $\Mod_{(k)}(S_{g-1,1})$ given by the kernel of the natural map
\begin{displaymath}
\Mod(S_{g-1,1}) \rightarrow \Out(\pi_1(S_{g-1,1},*)/\pi_1^{(k)}(S_{g-1,1}^*)).
\end{displaymath}

\bn Akin \cite{Akin} proved that there is a short  exact sequence 
\begin{displaymath}
1 \rightarrow \pi_1^{(k-1)}(S_{g-1}) \rightarrow \Mod_{(k)}(S_{g-1,1}) \rightarrow \Mod_{(k)}(S_{g-1}) \rightarrow 1.
\end{displaymath}
\bn which we call the \textit{Akin--Birman exact sequence}.  We will apply Lemma \ref{lerayserrelemma} to Akin's sequence to prove the following.

\begin{lemma}\label{akinlemma}
Let $g \geq 3$, $k \geq 2$.  Suppose that $H_{2g-5}(\Mod_{(k)}(S_{g-1});\Z)$ is infinitely generated.  Then $H_{2g-4}(\Mod_{(k)}(S_{g-1,1};\Z))$ is infinitely generated.
\end{lemma}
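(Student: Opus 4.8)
The plan is to apply Lemma \ref{lerayserrelemma} directly to the Akin--Birman exact sequence
\begin{displaymath}
1 \rightarrow \pi_1^{(k-1)}(S_{g-1}) \rightarrow \Mod_{(k)}(S_{g-1,1}) \rightarrow \Mod_{(k)}(S_{g-1}) \rightarrow 1,
\end{displaymath}
taking $F = \pi_1^{(k-1)}(S_{g-1})$, $G = \Mod_{(k)}(S_{g-1,1})$, and $Q = \Mod_{(k)}(S_{g-1})$. First I would record the numerology: Bestvina--Bux--Margalit's Theorem B gives $\cd(\Mod_{(k)}(S_{g-1})) = 2(g-1) - 3 = 2g - 5$, so Lemma \ref{lerayserrelemma} (once its hypotheses are checked) will yield that $H_{2g-4}(\Mod_{(k)}(S_{g-1,1});\Z)$ is infinitely generated, which is exactly the desired conclusion. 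Hypothesis (4) of Lemma \ref{lerayserrelemma} is then precisely the assumption that $H_{2g-5}(\Mod_{(k)}(S_{g-1});\Z)$ is infinitely generated. So the work is entirely in verifying hypotheses (1), (2), (3), and checking that $F$ is a non--cyclic free group.

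The group $F = \pi_1^{(k-1)}(S_{g-1})$ is the $(k-1)$st term of the lower central series of a surface group; for $g-1 \geq 2$ and $k \geq 2$ it is an infinite--index normal subgroup contained in $\pi_1^{(1)}(S_{g-1})$ (indeed equal to it when $k=2$), and it is a nontrivial infinite--index subgroup of a surface group, hence free of infinite rank — in particular non--cyclic and free, as Lemma \ref{lerayserrelemma} requires. Now $H_1(F;\Z) = H_1(S_\Gamma;\Z)$ where $S_\Gamma$ is the cover of $S_{g-1}$ corresponding to $\Gamma = \pi_1^{(k-1)}(S_{g-1})$; since $\pi_1(S_{g-1})/\Gamma$ is a free nilpotent quotient it is solvable and non--cyclic, and $S_{g-1}$ is closed (the $b = 0$ case), so Lemma \ref{nondegencoverlemma} applies and gives that the algebraic intersection form on $H_1(F;\Z)$ is nondegenerate. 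This form is $\Mod(S_{g-1})$--invariant, hence $Q$--invariant, which is hypothesis (1). For hypotheses (2) and (3) I would take $V = H_1(F;\Z)$ itself, so that $H_1(F;\Z)/V = 0$: then the restriction of the form to $V$ is nondegenerate by the above (hypothesis (2)), and the quotient is trivially a trivial $Q$--module and torsion free (hypothesis (3)). With all four hypotheses in hand, Lemma \ref{lerayserrelemma} delivers the conclusion.

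The main subtlety — and the step I would be most careful about — is verifying that the Akin--Birman sequence genuinely has the stated form, i.e. that the kernel of $\Mod_{(k)}(S_{g-1,1}) \to \Mod_{(k)}(S_{g-1})$ is exactly $\pi_1^{(k-1)}(S_{g-1})$ sitting inside the point--pushing subgroup $\pi_1(S_{g-1})$ of the classical Birman sequence; this is precisely the content attributed to Akin \cite{Akin}, so it can be cited, but one should confirm that the conventions for $\Mod_{(k)}(S_{g-1,1})$ in the excerpt (the kernel acting trivially on $\pi_1(S_{g-1,1},*)/\pi_1^{(k)}$) match Akin's. A secondary point worth a sentence is that the conjugation action of $Q = \Mod_{(k)}(S_{g-1})$ on $H_1(F;\Z)$ agrees with the restriction of the mapping class group action on the homology of the cover, so that ``$\Mod(S_{g-1})$--invariance of the intersection form'' really does give ``$Q$--invariance''; this follows from naturality of the Birman exact sequence, but is the kind of thing that should be stated explicitly. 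Everything else is bookkeeping.
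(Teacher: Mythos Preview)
Your choice of $V = H_1(F;\Z)$ is a genuine gap. Look at the proof of Lemma~\ref{lerayserrelemma}: the infinitely generated summand it produces is $H_{\cd(Q)}(Q;H_1(F;\Z)/V) \cong H_{\cd(Q)}(Q;\Z) \otimes H_1(F;\Z)/V$, and this vanishes when $H_1(F;\Z)/V = 0$. The lemma as stated does not explicitly forbid $V = H_1(F;\Z)$, but its proof gives nothing in that case, and hypotheses (1)--(2) alone tell you nothing about $H_{\cd(Q)}(Q;H_1(F;\Z))$ beyond the existence of an invariant form. So with your $V$ the conclusion does not follow.

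The paper instead takes $V$ to be the image of $H_1(\pi_1^{(k)}(S_{g-1});\Z) \to H_1(\pi_1^{(k-1)}(S_{g-1});\Z)$. The point is that then $H_1(F;\Z)/V \cong \pi_1^{(k-1)}(S_{g-1})/\pi_1^{(k)}(S_{g-1})$ is a nonzero finitely generated free abelian group on which $\Mod_{(k)}(S_{g-1})$ acts trivially by the very definition of the Johnson filtration --- this is hypothesis~(3) with actual content. The price is that hypothesis~(2), nondegeneracy of the intersection form restricted to this proper $V$, is no longer automatic; the paper proves it by combining Lemma~\ref{inforbit} (infinite orbits) with the finite generation of $H_1(F;\Z)/V$ to rule out degenerate elements in $V$. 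Your verification of hypothesis~(1) and your remarks on $\cd(Q)$ and on the $Q$--action agreeing with the covering action are essentially correct and match the paper, but you need a $V$ with nontrivial, trivially--acted--upon quotient for Lemma~\ref{lerayserrelemma} to bite.
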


\begin{proof}
We will show that the Akin--Birman exact sequence satisfies the hypotheses of Lemma~\ref{lerayserrelemma}.  Let $V \subseteq H_1(\pi_1^{(k-1)}(S_{g-1});\Z)$ be the image of the map
\begin{displaymath}
H_1(\pi_1^{(k)}(S_{g-1});\Z) \rightarrow H_1(\pi_1^{(k-1)}(S_{g-1});\Z).
\end{displaymath}
\bn Since $\pi_1^{(k)}(S_{g-1})$ is a characteristic subgroup, the subgroup $V$ is $\Mod_{(k)}(S_{g-1})$--invariant.  We will verify each hypothesis of Lemma \ref{lerayserrelemma} in turn.
\begin{enumerate}[label=(\arabic{enumi})]
\item \textit{$H_1(\pi_1^{(k-1)}(S_{g-1});\Z)$ admits a nondegenerate $\Mod_{(k)}(S_{g-1})$--invariant symplectic form $\langle \cdot, \cdot \rangle$.}  

Let $\Gamma = \pi_1^{(k-1)}(S_{g-1})$.  Let $S_{\Gamma}$ denote the cover of $S_{g-1}$ with $\pi_1(S_{\Gamma}) \cong \Gamma$.  There is a canonical isomorphism $H_1(S_\Gamma;\Z) \cong H_1(\Gamma;\Z)$ induced by the canonical map $\Gamma \rightarrow H_1(S_{\Gamma};\Z)$.  The symplectic form on $H_1(\Gamma;\Z)$ will be the pushforward of the algebraic intersection form $\langle \cdot, \cdot \rangle$ on $H_1(S_{\Gamma};\Z)$ under this isomorphism.  We will show that the $\Mod_{(k)}(S_{g-1})$--action on $H_1(\Gamma;\Z)$ given by the $\Mod_{(k)}(S_{g,1})$--conjugation on $\Gamma$ respects this symplectic form.  

Choose a basepoint $b \in S_{g-1}$ and a lift of the basepoint $b$ to $b_\Gamma \in S_{\Gamma}$.  The Birman exact sequence coupled with the Dehn--Nielsen--Baer theorem gives an identification 
\begin{displaymath}
\Mod(S_{g-1,1}) \cong \Aut(\pi_1(S_g)).
\end{displaymath}
\bn  The subgroup $\Gamma$ is characteristic, so there is a map
\begin{displaymath}
\Aut(\pi_1(S_{g-1})) \rightarrow \Aut(\Gamma).
\end{displaymath}
\bn By composing with the map $\Aut(\Gamma) \rightarrow \Out(\Gamma)$, there is a natural map
\begin{displaymath}
\Aut(\pi_1(S_{g-1})) \rightarrow \Out(\Gamma).
\end{displaymath}
\bn Since $\Gamma$ is characteristic, the choice of basepoints $b$ and $b_\Gamma$ coupled with the lifting criterion for covering spaces yields a map
\begin{displaymath}
\Mod(S_{g-1,1}) \rightarrow \Mod(S_\Gamma).
\end{displaymath} 
\bn Hence the algebraic intersection form on $H_1(\Gamma;\Z)$ is a $\Mod(S_{g-1,1})$--invariant symplectic form, and by restriction $\Mod_{(k)}(S_{g-1,1})$--invariant.  By the Akin--Birman exact sequence \cite{Akin}, the map
\begin{displaymath}
\Mod_{(k)}(S_{g-1,1}) \rightarrow \Mod_{(k)}(S_{g-1})
\end{displaymath}
\bn is surjective.  Hence for any $\varphi \in \Mod_{(k)}(S_{g-1})$, there is a lift $\overline{\varphi} \in \Mod_{(k)}(S_{g-1,1})$.  But if $\widehat{\varphi}$ is another such lift, the Akin--Birman exact sequence says that $\overline{\varphi}^{-1} \widehat{\varphi} \in \pi_1^{(k)}(S_{g-1}) = \Gamma$.  Therefore the composition $\overline{\varphi}^{-1} \widehat{\varphi}$ acts trivially on $H_1(S_{\Gamma};\Z) $ since $H_1(S_{\Gamma};\Z) \cong H_1(\Gamma;\Z)$, and so $\Mod_{(k)}(S_{g-1})$ acts in a well--defined way on $H_1(S_{\Gamma};\Z)$ and respects the nondegenerate form $\langle \cdot, \cdot \rangle$ on $H_1(S_{\Gamma};\Z)$.  Thus the $\Mod_{(k)}(S_{g-1})$--action on $H_1(\Gamma;\Z)$ respects the pushforward of the $H_1(S_\Gamma;\Z)$--intersection form.  The nondegeneracy of the form follows from Lemma \ref{nondegencoverlemma}, so hypothesis (1) holds.
\medskip
 
\item \textit{The restriction of $\langle \cdot, \cdot \rangle$ to $V$ is nondegenerate}

Suppose by way of contradiction that there is a non--zero $v \in V$ such that the linear map $\langle v, \cdot \rangle: H_1(\pi_1^{(k-1)}(S_{g-1});\Z) \rightarrow \Z$ is the zero map.  By Lemma~\ref{inforbit}, the $\pi_1(S_{g-1})$--orbit of $v$ in $H_1(\pi_1^{(k-1)}(S_{g-1});\Z)$ is infinitely generated.  Since $\langle \cdot, \cdot \rangle$ is $\pi_1(S_{g-1,1})$--invariant, any $\gamma v$ is degenerate for $\gamma \in \pi_1(S_{g-1,1})$.  Now, choose $w \in H_1(\pi_1^{(k-1)}(S_{g-1});\Z)$ such that $\langle v, w \rangle = 1$.  By Lemma \ref{inforbit}, the span of the $\pi_1(S_{g-1})$ orbit of $w$ is infinitely generated.  But $\langle \gamma w, \gamma v \rangle \neq 0$ for any $\gamma \in \pi_1(S_{g-1,1})$.  Since $\gamma v$ is degenerate in $V$ for any $\gamma \in \pi_1(S_{g-1,1})$, it must be the case that $\Span(\{\gamma w: \gamma \in \pi_1(S_{g-1})\})$ embeds into $H_1(\pi_1^{(k-1)}(S_{g-1};\Z))/V$.  But this latter space is finitely generated, which is a contradiction.  
\medskip

\item \textit{The quotient $H_1(\pi_1^{(k-1)};\Z)/V$ is a trivial $\Mod_{(k)}(S_{g-1})$--module.}

Since $\Mod_{(k)}(S_{g-1})$ acts trivially on $\pi_1(S_{g-1})/\pi_1^{(k)}(S_{g-1})$ by definition, $\Mod_{(k)}(S_{g-1})$ acts trivially on $H_1(\pi_1^{(k-1)}(S_{g-1});\Z)/V$ as well.

\medskip

\item \textit{$H_{\cd(\Mod_{(k)}(S_{g-1}))}(\Mod_{(k)}(S_{g-1});\Z)$ is infinitely generated.}  

By hypothesis, $H_{2g-5}(\Mod_{(k)}(S_{g-1});\Z)$ is infinitely generated.  Bestvina--Bux--Margalit \cite{BBM} showed that $\cd(\Mod_{(2)}(S_{g-1})) = 2(g-1) - 3 = 2g - 5$.  Hence 
\begin{displaymath}
\cd(\Mod_{(k)}(S_{g-1}) \leq \cd(\Mod_{(2)}(S_{g-1})) \leq 2g - 5.
\end{displaymath}
\bn Therefore if $H_{2g-5}(\Mod_{(k)}(S_{g-1};\Z)) \neq 0$, we have $\cd(\Mod_{(k)}) = 2g - 5$, so hypothesis~(4) of Lemma \ref{lerayserrelemma} holds.
\end{enumerate}
\bn Hence the hypotheses of Lemma~\ref{lerayserrelemma} hold for the Akin--Birman sequence, so $H_{2g-4}(\Mod_{(k)}(S_{g-1,1});\Z)$ is infinitely generated.
\end{proof}

\bn We are now almost ready to prove the main result of the section.  Before doing so, we will need one more auxiliary lemma which relates the slight variations of $\Mod_{(k)}$ that we have used throughout the paper.

\begin{lemma}\label{cleanuplemma}
Let $g \geq 3$ and $k \geq 3$.  Let $\Sigma_1 = (S_{g-1}^1, \{\{b_0\}\})$ and $\Sigma_2 = (S_{g-1}^2, \{\{b_0, b_1\}\})$.  Then the following hold:
\begin{enumerate}
\item $\Mod_{(k)}(S_{g-1,1}) \cong \Mod_{(k)}(\Sigma_1)$, and 
\item if $c \subseteq S_g$ is a nonseparating simple closed curve $\Stab_c(\Mod_{(k)}(S_g)) = \Mod_{(k)}(\Sigma_2)$.
\end{enumerate}
\end{lemma}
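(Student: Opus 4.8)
The two statements are essentially bookkeeping lemmas that translate between the three closely related flavors of "Johnson filtration for a surface with marked data" that appear in the paper: the punctured version $\Mod_{(k)}(S_{g-1,1})$, the partitioned-surface version $\Mod_{(k)}(\Sigma_1)$ from Putman's theory, and the curve-stabilizer version $\Stab_c(\Mod_{(k)}(S_g))$. In each case the strategy is the same: produce a natural isomorphism between the underlying mapping class groups (or recognize the stabilizer as a mapping class group of a subsurface), and then check that the two descriptions of the Johnson filtration agree under that identification by appealing to the characterization of $\Mod_{(k)}$ already established in the excerpt, namely the fact from Church \cite[Theorem 4.6]{Churchorbit} that $\Mod_{(k)}(\Sigma)$ is $\iota_*^{-1}(\Mod_{(k)}(S_h))$ for any map of partitioned surfaces $\iota:\Sigma \to S_h$, together with Putman's well-definedness \cite[Theorem 1.1]{Putmantorelli}.

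For part (1): capping off the boundary component $b_0$ of $S_{g-1}^1$ with a once-punctured disk gives a canonical isomorphism $\Mod(S_{g-1}^1) \cong \Mod(S_{g-1,1})$ (this is standard; see \cite[Section 3.6.2]{FarbMarg}, and it is compatible with the Birman/Dehn--Nielsen--Baer identification with $\Aut(\pi_1(S_{g-1}))$ used already in Lemma \ref{akinlemma}). Under this isomorphism $\pi_1(S_{g-1}^1,*) \cong \pi_1(S_{g-1,1},*)$ are identified, so the two subgroups of $\Mod$ cut out by the condition of acting trivially on $\pi_1/\pi_1^{(k)}$ literally coincide — one just has to observe that the defining map $\Mod(S_{g-1,1}) \to \Out(\pi_1(S_{g-1,1})/\pi_1^{(k)})$ for the punctured version and Church's characterization of $\Mod_{(k)}(\Sigma_1)$ via embeddings into closed surfaces both recover the same subgroup, because the partition $\{\{b_0\}\}$ forces the embedding to cap $b_0$ with something nondisk/nonannulus, which after forgetting the puncture is exactly the situation Church analyzes. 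So the identification is essentially tautological once the mapping class group isomorphism is set up carefully.

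For part (2): given a nonseparating simple closed curve $c \subseteq S_g$, cutting along $c$ produces a copy of $S_{g-1}^2$ with the two new boundary components forming a single block of the partition (since they bound the complementary annular neighborhood of $c$ on both sides, i.e. they are "glued" to each other in $S_g$); this realizes $S_{g-1}^2$ as a partitioned subsurface $\Sigma_2 = (S_{g-1}^2, \{\{b_0,b_1\}\})$ of $S_g$. The stabilizer $\Stab_c(\Mod(S_g))$ contains the image of $\Mod(S_{g-1}^2)$, and an element of $\Mod(S_g)$ fixing $c$ and restricting to the complementary subsurface gives an element of $\Mod(S_{g-1}^2)$; the subtlety here, which I expect to be the main obstacle, is that $\Stab_c(\Mod(S_g))$ is strictly larger than $\Mod(S_{g-1}^2)$ — it also contains the Dehn twist $T_c$ and mapping classes that swap the two sides of $c$ — so one must check that after intersecting with the Johnson filtration $\Mod_{(k)}(S_g)$, with $k \geq 3$, these extra elements drop out: $T_c$ acts nontrivially on $H_1(S_g;\Z)$ unless... actually $T_c$ lies in $\cI_g$ is false for nonseparating $c$, so $T_c \notin \Mod_{(1)}(S_g) \supseteq \Mod_{(k)}(S_g)$, and the orientation-reversing-on-$c$ type elements also fail to lie in the Torelli group, so indeed $\Stab_c(\Mod_{(k)}(S_g)) \subseteq \Mod(S_{g-1}^2)$. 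Combined with Church's characterization $\Mod_{(k)}(\Sigma_2) = \iota_*^{-1}(\Mod_{(k)}(S_g))$ for the inclusion $\iota:\Sigma_2 \hookrightarrow S_g$ of the cut surface, and the observation that the inclusion $\Mod(S_{g-1}^2) \hookrightarrow \Mod(S_g)$ lands in $\Stab_c$, this gives $\Stab_c(\Mod_{(k)}(S_g)) = \Mod_{(k)}(\Sigma_2)$. The one genuine point requiring care is verifying the $\iota_*^{-1}$ claim literally applies — one needs the complementary piece of $\iota(S_{g-1}^2)$ in $S_g$ (an annulus) to not be a problem, which is why the partition block must be $\{\{b_0,b_1\}\}$ rather than two singleton blocks; this is exactly the partitioned-surface convention, and I would spell out that $\Sigma_2 \hookrightarrow S_g$ is a legitimate map of partitioned surfaces in Putman's sense up to a mild stabilization (enlarging $S_g$ if needed to kill the annulus), so Church's theorem applies.
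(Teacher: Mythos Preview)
Your proposal has genuine gaps in both parts, stemming from a reversal of the relationship between the mapping class groups in question.

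\textbf{Part (1).} You claim that capping $b_0$ with a once-punctured disk gives an isomorphism $\Mod(S_{g-1}^1)\cong\Mod(S_{g-1,1})$. This is false: the capping homomorphism is surjective with kernel $\langle T_\delta\rangle$, the cyclic group generated by the boundary twist (this is exactly what \cite[Proposition 3.19]{FarbMarg} says, the reference you cite). So the argument cannot be ``tautological''; one must check that no power of $T_\delta$ lies in $\Mod_{(k)}(\Sigma_1)$. This is where the hypothesis $k\geq 3$ enters: under any allowed embedding $\Sigma_1\hookrightarrow S_h$, the curve $\delta$ becomes a separating curve, so $T_\delta\in\K(\Sigma_1)=\Mod_{(2)}(\Sigma_1)$, and one needs the second Johnson homomorphism to conclude $T_\delta^n\notin\Mod_{(3)}(\Sigma_1)$. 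The paper's proof is organized exactly this way.

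\textbf{Part (2).} You treat $\Mod(S_{g-1}^2)$ as a subgroup of $\Stab_c(\Mod(S_g))$ and then look for ``extra'' elements of the stabilizer (the twist $T_c$, side-swapping classes). This has the map pointing the wrong way. By Birman--Lubotzky--McCarthy there is a short exact sequence
\[
1\rightarrow\langle f\rangle\rightarrow\Mod(S_{g-1}^2)\rightarrow\Stab_c(\Mod(S_g))\rightarrow 1,
\]
where $f=T_{b_0}T_{b_1}^{-1}$; the cut-open map is a \emph{surjection}, not an injection, and your ``extra'' elements are all in the image already (e.g.\ $T_c$ is the image of $T_{b_0}$, and classes swapping the sides of $c$ come from diffeomorphisms of $S_{g-1}^2$ swapping the two boundary components). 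The real obstruction to an isomorphism is the \emph{kernel} $\langle f\rangle$, and the argument is to show that no power of the bounding pair map $f$ lies in $\Mod_{(k)}(\Sigma_2)$ for $k\geq 3$ (indeed $f$ is detected already by $\tau_1$). Your worry about the annular complement is well-placed, but once the BLM sequence is used as the starting point, Church's theorem is only needed to identify the preimage of $\Mod_{(k)}(S_g)$ inside $\Mod(S_{g-1}^2)$ with $\Mod_{(k)}(\Sigma_2)$, which is the well-definedness statement and does not require the particular embedding into $S_g$ to satisfy the no-annulus condition.
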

\begin{proof}
We prove each of these in turn.

\p{Part (1)} Church \cite[Definition 4.1]{Churchorbit} defines $\Mod_{(k)}(\Sigma_1)$ to be the kernel of the action of $\Mod(S_{g-1}^1)$ on the group $\pi_1(S_{g-1}^1, *)/\pi_1^{(k)}(S_{g-1}^1,*)$, where $*$ is a basepoint contained in $\partial S_{g-1}^1$.  Therefore there is a natural map
\begin{displaymath}
\Mod_{(k)}(\Sigma_1) \rightarrow \Mod_{(k)}(S_{g-1,1})
\end{displaymath}

\bn that is surjective and has kernel contained in $\langle T_{\delta} \rangle$, where $T_{\delta}$ is the twist around the boundary component of $S_{g-1}^1$.  But no power of $T_{\delta}$ is contained in $\Mod_{(k)}(\Sigma_1)$ for $k \geq 3$, so the map $\Mod_{(k)}(\Sigma_1) \rightarrow \Mod_{(k)}(S_{g-1,1})$ is an isomorphism.

\p{Part (2)} Let $f \in \Mod(S_{g-1}^2)$ denote the bounding pair map corresponding to the bounding pair consisting of the two boundary components of $S_{g-1}^2$.  A result of Birman--Lubotzky--McCarthy \cite[Lemma 2.1]{BLM} says that there is a short exact sequence

\begin{displaymath}
1 \rightarrow \langle f \rangle \rightarrow \Mod(S_{g-1}^2) \rightarrow \Stab_c(\Mod(S_g)) \rightarrow 1.
\end{displaymath} 

\bn The latter map $\Mod(S_{g-1}^2) \rightarrow \Stab_c(\Mod(S_g))$ is induced by the embedding $S_{g-1}^2 \rightarrow S_g$ such that the complement of the image of the embedding contains $c$.  In particular, the work of Church \cite[Theorem 4.6]{Churchorbit} says that this short exact sequence restricts to a short exact sequence

\begin{displaymath}
1 \rightarrow F \rightarrow \Mod_{(k)}(\Sigma_2) \rightarrow \Stab_c(\Mod_{(k)}(S_g)) \rightarrow 1
\end{displaymath}

\bn for $F \subseteq \langle f \rangle$.  But then no power of $f$ is contained in $\Mod_{(k)}(\Sigma_2)$ for $k \geq 3$, so $F = \{1\}$ and hence $\Mod_{(k)}(\Sigma_2) \cong \Stab_c(\Mod_{(k)}(S_g))$.
\end{proof}

\bn Recall that our goal is to show that $H_{2g-5}(\Mod_{(k)}(S_{g-1};\Z))$ is infinitely generated, then the group $H_{2g-3}(\Stab_c(\Mod_{(k)}(S_{g}));\Z))$ is infinitely generated as well, for $c$ a non--separating simple closed curve.  

\medskip

\begin{proof}[Proof of Proposition \ref{birmanhomolprop}]
Since $k \geq 3$, Lemma \ref{cleanuplemma} says that we have isomorphisms $\Mod_{(k)}(\Sigma_1) \cong \Mod_{(k)}(S_{g-1,1})$ and $\Mod_{(k)}(\Sigma_2) \cong \Stab_c(\Mod_{(k)}(S_g))$.  Hence, it suffices to show that if the group $H_{2g-4}(\Mod_{(k)}(\Sigma_1);\Z)$ is infinitely generated, then $H_{2g-3}(\Mod_{(k)}(\Sigma_2);\Z)$ infinitely generated.  We will apply Lemma \ref{lerayserrelemma} to the short exact sequence from Proposition \ref{oneboundsymplemma}.  We take $V \subseteq H_1(F_k;\Z)$ to be the image of $F_{k+1}$ in $H_1(F_k;\Z)$.  We verify each hypothesis for Lemma \ref{lerayserrelemma} in turn.
 
\begin{enumerate}[label=(\arabic{enumi})]

\item \textit{$H_1(F_k;\Z)$ admits a nondegenerate $\Mod_{(k)}(\Sigma_1)$--invariant symplectic form} 

This form is the algebraic intersection form on the cover of $S_{F_k} \rightarrow S_{g-1}^1$ corresponding to $F_k$.  The group $\Mod_{(k)}(\Sigma_1)$ acts on $H_1(S_{F_k};\Z)$ as a consequence of Lemma \ref{conjactionlemma}.  This action respects the algebraic intersection form on $H_1(S_{F_k};\Z)$, since the action of $\varphi \in \Mod_{(k)}(\Sigma_1)$ on $H_1(S_{F_k};\Z)$ is the pushforward of a homeomorphism $S_{F_k} \rightarrow S_{F_k}$.  The group $\pi_1(S_{g-1}^1)/F_k$ is solvable and non--cyclic by property (3) of Lemma \ref{oneboundsymplemma}.  Furthermore, if $\delta$ is the loop around the boundary component of $S_{g-1}^1$, then $\delta \not \in F_k$ for $k \geq 3$, since the push along $\delta$ induces a Dehn twist along a separating curve.  Therefore the  algebraic intersection form on $H_1(F_k;\Z)$ is non--degenerate by Lemma \ref{nondegencoverlemma}.

\item \textit{The restriction of $\langle \cdot, \cdot \rangle$ to $V$ is nondegenerate} 

The form $\langle \cdot, \cdot \rangle$ is non--degenerate when restricted to $V = \im(F_{k+1} \rightarrow H_1(F_k;\Z))$.  In particular, Proposition \ref{oneboundsymplemma} says that $H_1(F_k;\Z)/V$ is nontrivial, finitely generated, and free abelian.  Then Lemma \ref{inforbit} of says that the $\pi_1(S_{g-1}^1)$ orbit of any nontrivial element in $H_1(F_k;\Z)$ spans an infinitely generated subgroup.  The argument is complete by the same argument as step (2) of Lemma \ref{akinlemma}.

\item \textit{The quotient $H_1(F_k;\Z)/V$ is a trivial $\Mod_{(k)}(\Sigma_2)$--module}

This is property (2) of $F_k$ from Proposition \ref{oneboundsymplemma}.

\item \textit{The group $H_{\cd(\Mod_{(k)}(\Sigma_1))}(\Mod_{(k)}(\Sigma_1);\Z)$ is infinitely generated.} 

By monotonicity and the fact that $\cd(\Mod_{(2)}(S_{g-1})) = 2g - 3$ \cite[Theorem B]{BBM}, we have $\cd(\Mod_{(k)}(S_{g-1}) \leq 2g - 3$.  By applying the sub--additivity of cohomological dimension on short exact sequences \cite[Section VIII.2]{Brownbook} to the Akin--Birman exact sequence,  we have $\cd(\Mod_{(k)}(S_{g-1,1})) \leq 2g - 4$.  Since $H_{2g-4}(\Mod_{(k)}(S_{g-1,1});\Z)$ is non--zero by Lemma \ref{akinlemma}, we must have
\begin{displaymath}
\cd(\Mod_{(k)}(S_{g-1,1})) = 2g - 4.
\end{displaymath}
\bn Then Lemma \ref{akinlemma} says that $H_{2g-4}(\Mod_{(k)}(S_{g-1,1}))$ is infinitely generated, so the hypothesis is satisfied.
\end{enumerate}
\bn Therefore $H_{2g-3}(\Mod_{(k)}(\Sigma_2);\Z)$ is infinite dimensional by Lemma \ref{lerayserrelemma}.  Since $\Mod_{(k)}(\Sigma_2)$ is isomorphic to $\Stab_c(\Mod_{(k)}(S_g))$ by Lemma \ref{cleanuplemma}, the proof is complete.  
\end{proof}

\section{The proofs of Theorems~\ref{mainthm} and~\ref{infgenthm}}\label{mainsection}

In this section, we complete the proofs of Theorems \ref{mainthm} and \ref{infgenthm}.  We begin by briefly discussing the complex of cycles, $\calB(S_g)$, a cell complex defined by Bestvina--Bux--Margalit \cite[Section 2]{BBM}.  The following properties of $\calB(S_g)$ are the relevant properties for the purposes of this paper:

\begin{enumerate}[label=\textit{(\alph{enumi})}]
\item $\calB(S_g)$ is contractible \cite[Theorem E]{BBM},
\item there is a vertex $v \in \calB(S_g)$ represented by a nonseparating simple closed curve $c \subseteq S_g$, and
\item $\calB(S_g)$ admits a rotation--free $\cI_g$--action \cite[Corollary 1.8]{Ivanovcomplex}.
\end{enumerate}
As discussed in the introduction, we will adopt Bestvina--Bux--Margalit's strategy for their proof that $H_{3g-5}(\cI_g;\Z)$ is infinitely generated \cite[Section 8]{BBM}.  Specifically, we will consider the action of $\cI_g$ on the complex $\calB(S_g)$.  We will apply the equivariant homology spectral sequence \cite[Section VII.7]{Brownbook}.  We will use Proposition~\ref{birmanhomolprop} and the second property of $\calB(S_g)$ above to show that the entry $\bE_{0,2g-3}^{\infty}$ is infinitely generated.  We begin with the following result.  
\begin{lemma}\label{injstablemma}
Let $S = S_g$ with $g \geq 3$, and let $k \geq 3$.  Let $c \subseteq S_g$ be a nonseparating simple closed curve.  The pushforward map
\begin{displaymath}
H_{2g-3}(\Stab_{c}\Mod_{(k)}(S_g);\Z) \rightarrow H_{2g-3}(\Mod_{(k)}(S_g);\Z)
\end{displaymath}
\noindent is an injection.
\end{lemma}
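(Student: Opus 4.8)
The plan is to run the equivariant homology spectral sequence for the action of $\Mod_{(k)}(S_g)$ on the complex of cycles $\calB(S_g)$, and to show that the relevant edge map is an injection by a dimension-counting argument on the $E^1$-page. Concretely, I would set $G = \Mod_{(k)}(S_g)$, let $\calB = \calB(S_g)$, and consider the spectral sequence
\begin{displaymath}
\bE_{p,q}^1 = \bigoplus_{\sigma \in \Sigma_p} H_q(\Stab_G(\sigma);\Z) \Rightarrow H_{p+q}(G;\Z),
\end{displaymath}
where $\Sigma_p$ is a set of representatives for the $G$-orbits of $p$-cells of $\calB$ (using that the action is rotation-free, by property (c), so that cell stabilizers fix cells pointwise and the usual form of the equivariant spectral sequence applies; $\calB$ is contractible by property (a), so this converges to $H_*(G;\Z)$). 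The vertex $v$ represented by the nonseparating curve $c$ contributes a summand $H_{2g-3}(\Stab_c(G);\Z)$ to $\bE_{0,2g-3}^1$, and the composite
\begin{displaymath}
H_{2g-3}(\Stab_c(G);\Z) \hookrightarrow \bE_{0,2g-3}^1 \twoheadrightarrow \bE_{0,2g-3}^\infty \hookrightarrow H_{2g-3}(G;\Z)
\end{displaymath}
is exactly the pushforward map in the statement. So it suffices to show that (i) nothing in the $H_{2g-3}(\Stab_c(G);\Z)$ summand is killed by an outgoing differential $d^r\colon \bE_{0,2g-3}^r \to \bE_{-r,2g-2+r}^r$, which is automatic since the target is zero (negative $p$), and (ii) the $H_{2g-3}(\Stab_c(G);\Z)$ summand receives no incoming differential, i.e. the differentials $d^r\colon \bE_{r,2g-2-r}^r \to \bE_{0,2g-3}^r$ vanish on that summand for all $r \geq 1$.

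For (ii), the key input is a cohomological dimension bound: I want $\cd(\Stab_G(\sigma)) \leq 2g-4$ for every positive-dimensional cell $\sigma$ of $\calB$, so that $\bE_{p,q}^1 = 0$ whenever $p \geq 1$ and $q \geq 2g-3$, which forces $\bE_{r,2g-2-r}^r = 0$ for all $r \geq 2$ (here $2g-2-r \geq 2g-3$ when $r \leq 1$, and for $r \geq 2$ one needs a slightly finer statement — actually the cleanest route is: the source of any incoming differential lies in total degree $2g-3$ with $p \geq 1$, hence $q = 2g-3-p \leq 2g-4$, so I instead need that the $d^1$ differential out of $\bE_{1,2g-3}^1$... ). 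Let me restructure: the incoming differential $d^r$ to $\bE_{0,2g-3}^r$ comes from $\bE_{r, 2g-3-r+1}^r = \bE_{r,2g-2-r}^r$; for $r=1$ this is $\bE_{1,2g-3}^1 = \bigoplus_{\sigma \in \Sigma_1} H_{2g-3}(\Stab_G(\sigma);\Z)$, and for $r \geq 2$ it is a subquotient of $\bE_{r,2g-2-r}^1 = \bigoplus_{\sigma \in \Sigma_r} H_{2g-2-r}(\Stab_G(\sigma);\Z)$ with $2g-2-r \leq 2g-4$. So the one genuinely dangerous differential is $d^1$ out of $\bE_{1,2g-3}^1$, and it suffices to show $H_{2g-3}(\Stab_G(e);\Z) = 0$ for every edge $e$ of $\calB$; for $r \geq 2$ it suffices to show $\cd(\Stab_G(\sigma)) \leq 2g-4$ for every $\sigma$ with $\dim \sigma \geq 2$. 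By the work of Bestvina--Bux--Margalit \cite{BBM} the stabilizer in $\cI_g$ (and hence in the finite-index-free... no, in the subgroup $\Mod_{(k)}(S_g)$) of any positive-dimensional cell of $\calB$ is, up to the structure of cells of $\calB$ being built from systems of disjoint curves, contained in a mapping class group of a surface with strictly smaller complexity; using monotonicity of $\cd$ together with the already-established bound $\cd(\Mod_{(k)}(S_h)) \leq 2h-3$ and the Birman-type exact sequences relating bounded and closed surfaces, one gets the bound $2g-4$ (or better) for cells of dimension $\geq 1$. I expect this reduction — pinning down exactly which subsurface each cell-stabilizer embeds in and checking the complexity drops by at least one — to mirror \cite[Section 8]{BBM} closely.

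The main obstacle, and the step I would spend the most care on, is the cohomological dimension estimate for the edge stabilizers: showing $H_{2g-3}(\Stab_G(e);\Z) = 0$ for every edge $e \in \calB$. An edge of $\calB(S_g)$ corresponds to a pair of cycles differing by an elementary move, and its $\Mod(S_g)$-stabilizer preserves the underlying multicurve, so it sits inside the mapping class group of the complement, which has strictly smaller genus-plus-punctures complexity; pushing this through to $\Mod_{(k)}$ via Church's identification \cite{Churchorbit} of the Johnson filtration of a partitioned surface, and then invoking $\cd(\Mod_{(2)}) = 2g-3$ together with subadditivity of $\cd$ under the relevant extensions, should give $\cd(\Stab_G(e)) \leq 2g-4 < 2g-3$. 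Once that dimension bound is in hand, every potential differential into the $v$-summand of $\bE_{0,2g-3}$ has zero source, the summand survives to $\bE^\infty$ unchanged, and the edge homomorphism is the desired injection; I would close by noting that the composite of the (split) inclusion into $\bE^1$, the surjection onto $\bE^\infty$ restricted to this summand (which is an isomorphism here), and the inclusion of $\bE^\infty_{0,2g-3}$ as a subgroup of $H_{2g-3}(G;\Z)$ is exactly the pushforward.
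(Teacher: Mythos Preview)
Your approach is the same as the paper's: run the equivariant homology spectral sequence for the $\Mod_{(k)}(S_g)$-action on $\calB(S_g)$ and argue that the diagonal $p+q = 2g-2$ vanishes already on page~1, so nothing can hit $\bE_{0,2g-3}$. There is a slip in your bookkeeping, though: for $r \geq 2$ you claim it suffices to have $\cd(\Stab_G(\sigma)) \leq 2g-4$ for $r$-cells $\sigma$, but when $r = 2$ the source of $d^2$ is $\bE_{2,\,2g-4}^2$, and a bound $\cd \leq 2g-4$ does not force $H_{2g-4}$ to vanish. What you actually need is the graded bound $\cd(\Stab_\sigma \Mod_{(k)}(S_g)) \leq 2g-3-j$ for every $j$-cell $\sigma$. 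The paper obtains this in one line by citing \cite[Proposition~6.2]{BBM} (stated there for $\Mod_{(2)}$) and invoking monotonicity of cohomological dimension under subgroups; with that bound the entire diagonal $p+q = 2g-2$ is zero on page~1, hence $\bE_{0,2g-3}^1 \cong \bE_{0,2g-3}^\infty$, and no separate treatment of edges versus higher cells---and no re-derivation of the stabilizer dimension estimates---is needed.
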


\begin{proof}
Let $\bE_{*,*}^*$ be the equivariant homology spectral sequence given by the action of $\Mod_{(k)}(S_g)$ on $\calB(S_g)$ as discussed in the introduction.  For each $0 \leq k \leq 2g  - 3$, let $\Sigma_k$ be a set of representatives in $\calB(S_g)$ of the $k$--cells in the quotient $\calB(S_g)/\cI_g$.  By property \textit{(c)} of $\calB(S_g)$, $\Mod_{(k)}(S_g)$ acts on $\calB(S_g)$ without rotations.  Hence there is a decomposition
\begin{displaymath}
\bE_{p,q}^1 \cong \bigoplus_{\sigma \in \Sigma_p} H_q(\Stab_\sigma(\Mod_{(k)}(S_g)).
\end{displaymath}
\bn By property \textit{(b)} of $\calB(S_g)$, there is a vertex $v$ of $\calB(S_g)$ represented by a simple closed curve $c$, there is an injection
\begin{displaymath}
H_{2g-3}(\Stab_c (\Mod_{(k)}(S_g));\Z) \hookrightarrow \bE_{0,2g-3}^1.
\end{displaymath}
\noindent Bestvina--Bux--Margalit prove that for any $j$--cell $\sigma \subseteq \calB(S)$ \cite[Proposition 6.2]{BBM}, we have
\begin{displaymath}
\cd(\Stab_\sigma\Mod_{(2)}(S_g)) \leq 2g - 3 - j.
\end{displaymath}
\noindent  Since $H \subseteq G$ implies $\cd(H) \subseteq \cd(G)$ \cite[Section VIII.2]{Brownbook}, we have
\begin{displaymath}
\cd(\Stab_\sigma\Mod_{(k)}(S_g)) \leq 2g - 3 - j.
\end{displaymath}
\bn for any $j$--cell $\sigma \subseteq \calB(S)$.  Hence the non--zero entries of the first page of the spectral sequence $\bE_{p,q}^r$ are as in Figure \ref{specfig}.

\begin{figure}[h]
\begin{center}
\begin{tikzpicture}
\matrix (m) [matrix of math nodes,
             nodes in empty cells,
             nodes={minimum width=10ex,
                    minimum height=5ex,
                    outer sep=-5pt},
             text centered,anchor=center]{
    2g - 3\strut   &  \displaystyle{\bigoplus_{\sigma \in \Sigma_0} H_{2g-3}(\Stab_{\sigma}(\Mod_{(k)}(S_g)))}  & 0 & \cdots& \cdots  & 0 \\
    \vdots   & \vdots   &\vdots & \ddots       & \ddots & \vdots  \\      
    1      &  \displaystyle{\bigoplus_{\sigma \in \Sigma_0} H_{1}(\Stab_{\sigma}(\Mod_{(k)}(S_g)))}   &  \cdots & \cdots  &  0 & \vdots \\             
   0      &  \displaystyle{\bigoplus_{\sigma \in \Sigma_0} H_{0}(\Stab_{\sigma}(\Mod_{(k)}(S_g))) }  & \cdots & \cdots & \cdots &  0 \\
  \quad\strut &  0   &  1 & \cdots & 2g-3& {2g-2}\\};
\draw[thick] (m-1-1.north east) -- (m-5-1.east) ;
\draw[thick] (m-5-1.north) -- (m-5-6.north east) ;
\end{tikzpicture}
\end{center}
\caption{Page 1 of the sequence $\bE_{p,q}^r$}\label{specfig}
\end{figure}

\bn On the diagonal $p +q = 2g - 2$, every entry $\bE_{p,q}^1$ is $0$.  Therefore for every $r \geq 1$, the map $d_{r+1,2g-3 - r}^r$ mapping into $\bE_{0,2g-3}^{r}$ is the zero map.  Hence there is an injection
\begin{displaymath}
H_{2g-3}(\Stab_c \Mod_{(k)}(S_g);\Z) \hookrightarrow \bE_{0,2g-3}^\infty.
\end{displaymath}
\noindent  Property \textit{(a)} of $\calB(S_g)$ says that $\calB(S_g)$ is contractible.  Hence $\bE_{p,q}^r$ converges to $H_{p+q}(\Mod_{(k)}(S_g);\Z)$ \cite[Section V]{Brownbook}, so $\bE_{0,2g-3}^\infty$ injects into $H_{2g-3}(\Mod_{(k)}(S_g);\Z)$.  Hence the composition
\begin{displaymath}
H_{2g-3}(\Stab_c \Mod_{(k)}(S_g);\Z) \rightarrow \bE_{0,2g-3}^1 \cong \bE_{0,2g-3}^\infty \rightarrow H_{2g-3}(\Mod_{(k)}(S_g);\Z)
\end{displaymath}
\noindent is an injection, so the proof is complete.
\end{proof}
\noindent  We are now ready to prove Theorem~\ref{infgenthm}.

\medskip  

\begin{proof}[Proof of Theorem~\ref{infgenthm}]
Fix a $k \geq 3$.  We will prove the theorem by induction on $g$.

\p{Base case: $g = 2$}  As mentioned in the introduction, Mess showed that $\Mod_{(1)}(S_2)$ is an infinitely generated free group \cite{Messfree}.  Since the $k$th term of the lower central series $\Mod_{(1)}^{(k)}(S_2)$ is contained in $\Mod_{(k)}(S_2)$ for every $k \geq 2$, $\Mod_{(k)}(S_2)$ is an infinitely generated free group as well.  Hence $H_1(\Mod_{(k)}(S_2);\Z)$ is infinitely generated.

\p{Inductive step}  Suppose that the theorem holds for all $2 \leq g' < g$.  Let $c$ be a nonseparating simple closed curve on $S_g$.  By Proposition~\ref{birmanhomolprop} and the inductive hypothesis, $H_{2g-3}(\Stab_c(\Mod_{(k)}(S_g));\Z)$ is infinitely generated.  Then by Lemma~\ref{injstablemma}, $H_{2g-3}(\Mod_{(k)}(S_g);\Z))$ is infinitely generated as well, so the proof is complete.
\end{proof}
\noindent We now prove the main result of the paper.

\medskip

\begin{proof}[Proof of Theorem~\ref{mainthm}]
By Theorem~\ref{infgenthm} and the universal coefficient theorem, \begin{displaymath}
\cd(\Mod_{(k)}(S_g)) \geq 2g - 3.
\end{displaymath}
\bn  Bestvina--Bux--Margalit  showed that $\cd(\Mod_{(2)}(S_g)) = 2g - 3$ \cite[Theorem B]{BBM}.  By the monotonicity of cohomological dimension, we have 
\begin{displaymath}
\cd(\Mod_{(2)}(S_g)) \geq \cd(\Mod_{(k)}(S_g))
\end{displaymath}
\bn for $k \geq 2$.  Hence the equality $\cd(\Mod_{(k)}(S_g)) = 2g - 3$ holds.
\end{proof}

\bibliographystyle{alpha}
\bibliography{C:/Users/dsmin/OneDrive/Documents/Documents/Math/Bibliography/mainbib}

\end{document}